\newcounter{itemcounter}
\numberwithin{itemcounter}{section}
\newtheorem{thm}[itemcounter]{Theorem}
\newtheorem{lem}[itemcounter]{Lemma}
\newtheorem{defi}[itemcounter]{Definition}
\newtheorem{prop}[itemcounter]{Proposition}
\newtheorem{cor}[itemcounter]{Corollary}
\newtheorem{rem}[itemcounter]{Remark}
\newtheorem*{thm*}{Theorem}
\newtheorem*{con*}{Conjecture}
\newtheorem*{cor*}{Corollary}
\newtheorem*{ack*}{Acknowledgements}
\newcommand{\Syl}{\mathop{\rm Syl}\nolimits}
\newcommand{\Irr}{\mathop{\rm Irr}\nolimits}
\newcommand{\IBr}{\mathop{\rm IBr}\nolimits}
\newcommand{\Tr}{\mathop{\rm Tr}\nolimits}
\newcommand{\Mat}{\mathop{\rm Mat}\nolimits}
\newcommand{\Hom}{\mathop{\rm Hom}\nolimits}
\newcommand{\End}{\mathop{\rm End}\nolimits}
\newcommand{\Aut}{\mathop{\rm Aut}\nolimits}
\newcommand{\Out}{\mathop{\rm Out}\nolimits}
\newcommand{\Inn}{\mathop{\rm Inn}\nolimits}
\newcommand{\Rad}{\mathop{\rm Rad}\nolimits}
\newcommand{\rk}{\mathop{\rm rk}\nolimits}
\newcommand{\SF}{\mathop{\rm sf}\nolimits}
\newcommand{\OF}{\mathop{\rm f_\mathcal{O}}\nolimits}
\newcommand{\SOF}{\mathop{\rm sf_\mathcal{O}}\nolimits}
\newcommand{\mf}{\mathop{\rm mf}\nolimits}
\newcommand{\op}{\mathop{\rm op}\nolimits}
\newcommand{\nth}{\mathop{\rm th}\nolimits}
\newcommand{\cO} {\mathcal{O}}
\title{Towards Donovan's conjecture for abelian defect groups \footnote{This research was supported by the EPSRC (grant no. EP/M015548/1).} }
\author{Charles W. Eaton\footnote{School of Mathematics, University of Manchester, Manchester, M13 9PL, United Kingdom. Email: charles.eaton@manchester.ac.uk} and Michael Livesey\footnote{School of Mathematics, University of Manchester, Manchester, M13 9PL, United Kingdom. Email: michael.livesey@manchester.ac.uk}}
\date{4th June 2018}
\begin{document}

\maketitle

\begin{abstract}
We define a new invariant for a $p$-block of a finite group, the strong Frobenius number, which we use to address the problem of reducing Donovan's conjecture to normal subgroups of index $p$. As an application we use the strong Frobenius number to complete the proof of Donovan's conjecture for $2$-blocks with abelian defect groups of rank at most $4$ and for $2$-blocks with abelian defect groups of order at most $64$.

\end{abstract}

\section{Introduction}

Let $p$ be a prime and $(K,\cO,k)$ a $p$-modular system with $k$ algebraically closed. Donovan's conjecture states that for $P$ a fixed $p$-group, there are only finitely many blocks of $kG$ for finite groups $G$ with defect group $D \cong P$. One approach to the conjecture is the reduction to quasisimple groups followed by the application of the classification of finite simple groups. For example, in~\cite{ekks14} it was proved that Donovan's conjecture holds for elementary abelian $2$-groups in this way. The reason that this result cannot at present be extended to arbitrary abelian $2$-groups is that it is not known how Morita equivalence classes of blocks relate to those for blocks of normal subgroups of index $p$ in general. The special case of a split extension is treated in~\cite{kk96}, and this is used in the proof of Donovan's conjecture for elementary abelian $2$-groups.

The idea here is to extend the idea of~\cite{ke04} where, for blocks with a fixed defect group, Donovan's conjecture is shown to be equivalent to two conjectures: that the Cartan invariants are uniformly bounded and that the \emph{Morita Frobenius number} is uniformly bounded. Roughly the first conjecture says that the dimensions of basic algebras are bounded and the second that the size of the fields of definition of the basic algebras are bounded. We define the \emph{strong Frobenius number} $\SF(B)$ of a block $B$ of $kG$, which plays a similar role but can be shown to respect normal subgroups of index $p$ as follows:

\begin{thm*}
Let $G$ be a finite group and $N$ a normal subgroup of $G$ of index $p$. Now let $B$ be a block of $k G$ with abelian defect group $D$ and $b$ a block of $k N$ covered by $B$. Then $\SF(B)\leq\SF(b)$.
\end{thm*}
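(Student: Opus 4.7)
The natural starting point is Clifford theory applied to the pair $(B,b)$. Since $[G:N]=p$ is prime, the quotient $G/N$ acts on $\Blk(kN)$ and the $G$-orbit of $b$ has either $1$ or $p$ elements. I would split the argument along these two cases: if the orbit has size $p$, then $B$ is Morita equivalent to $b$ in a very structured way (via Fong--Reynolds / induction from the stabiliser, which here is $N$ itself), and I would verify that this correspondence transports a strong Frobenius isomorphism of $b$ to one of $B$. The point is that the Frobenius twist is defined entrywise on matrix coefficients and so commutes formally with induction; the bulk of this case should be bookkeeping to confirm that the data recorded by $\SF$ is preserved under the reduction.

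The substantive case is when $b$ is $G$-stable. Here $B$ is realised as a crossed-product-type extension of $b$ by the cyclic group $G/N$ of order $p$, and the task is to extend a witness for $\SF(b) \leq n$ to a witness for $\SF(B) \leq n$. I would fix an isomorphism $\varphi$ realising $\SF(b) \leq n$ and try to extend it to $B$. Conjugation by a coset representative of $G/N$ induces an outer automorphism $\alpha$ of $b$; for the extension to exist one needs $\varphi$ to be compatible (up to the slack built into the definition of $\SF$) with $\alpha$. The abelian defect hypothesis enters here, via structural results on automorphism groups of blocks with abelian defect (compare the source algebra techniques of Puig, and the analyses used for instance in~\cite{ke04}), to show that $\alpha$ has enough rigidity for this compatibility to be arranged. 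One then defines the extension of $\varphi$ to $B$ by prescribing its value on a lift of a generator of $G/N$ and checks that the relations are preserved after twisting by $p^n$-th powers.

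The main obstacle is precisely the compatibility step in the stable case: the ordinary Morita Frobenius number does not retain enough information about the ambient $G$-action on $b$ to allow this extension, and this is the obstruction whose removal motivates the definition of the strong Frobenius number. In other words, the point of the definition of $\SF$ (made earlier in the paper) should be exactly that it records enough additional data for the isomorphism of $b$ to determine an isomorphism of $B$, so once the definition is carefully unpacked the extension argument in the stable case becomes essentially formal and the theorem follows by combining the two cases.
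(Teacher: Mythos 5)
Your two-case decomposition along the $G$-orbit of $b$ matches the paper's, and the non-stable case via Fong--Reynolds together with invariance of $\SF$ under Morita equivalence (Proposition~\ref{morita_O_proposition}) is fine. However, in the $G$-stable case there is a genuine gap. You frame the problem as needing to extend a strong Frobenius isomorphism $\varphi$ of $kb$ across a crossed product of $G/N$ with $kb$, which requires compatibility of $\varphi$ with the outer automorphism $\alpha$ of $kb$ induced by conjugation by a lift of a generator of $G/N$, and you then assert that the definition of $\SF$ is precisely what removes this obstruction, making the extension ``essentially formal.'' That is not so: condition (1) of Definition~\ref{strong_frobenius_definition} only constrains $\varphi$ on $Z(kb)$, which says nothing directly about $\varphi \circ \alpha$ versus $\alpha \circ \varphi$ on all of $kb$, so the compatibility you need is not a consequence of the definition alone, and your appeal to unspecified ``structural results on automorphism groups of blocks with abelian defect'' leaves the actual mechanism unidentified.

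What the paper actually uses, and what you are missing, is the specific structural result~\cite[Theorem 2.1]{el17} (and its $\cO$-analogue, Theorem~\ref{theorem_unit}): under the standing hypotheses one may reduce to $G=ND$ and then there exists a $G/N$-graded unit $a \in Z(kB)$ of multiplicative order a power of $p$ with $kB = \bigoplus_{j=0}^{p-1} a^j kb$. Because $a$ is \emph{central in $kB$}, the putative outer action of $G/N$ on $kb$ is trivialised: there is no crossed-product twisting to reconcile. One simply defines the extension $\tilde\varphi$ by $\tilde\varphi|_{kb}=\varphi$ and $\tilde\varphi(a)=\zeta_G^{\circ n}(a)$; this is consistent because $a^p \in Z(kb)$, where condition (1) forces $\varphi(a^p)=\zeta_N^{\circ n}(a^p)=\zeta_G^{\circ n}(a)^p$. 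You also omit how condition (2) is verified for $\tilde\varphi$: since $a$ is central of $p$-power order, Schur's lemma forces $a$ to act as the identity on every simple $kB$-module $S$, so $S|_N$ is simple and condition (2) for $\tilde\varphi$ follows from condition (2) for $\varphi$. Without the central unit $a$ your extension step has no definition, so the proposal as written does not prove the theorem.
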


This is part of Theorem \ref{normal_index_p_theorem}, which also contains a version for blocks of $\cO G$.

We demonstrate the power of this definition by using it to prove Donovan's conjecture for abelian $2$-groups of rank at most $4$, and further to prove the conjecture for all abelian $2$-groups of order up to $2^6$. Whilst the strong Frobenius number is well-suited to the consideration of blocks defined over $\cO$, in our applications here we still have to make use of K\"ulshammer's reductions for Donovan's conjecture in~\cite{ku95}, which are only known over $k$.

\begin{thm*}
Let $P$ a finite abelian $2$-group of rank at most $4$ or of order at most $64$. Then Donovan's conjecture holds for $P$, that is, there are only finitely many Morita equivalence classes of blocks $B$ of group algebras $kG$ with defect group $D \cong P$.
\end{thm*}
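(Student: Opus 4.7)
The plan is to combine the new index-$p$ reduction Theorem \ref{normal_index_p_theorem} with K\"ulshammer's reductions from \cite{ku95} and the existing treatment of elementary abelian defect groups from \cite{ekks14}, handling the transition between $k$ and $\cO$ with care.

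First I would recall from \cite{ke04} that, for blocks over $k$ with a fixed defect group, Donovan's conjecture is equivalent to a uniform bound on the Cartan invariants together with a uniform bound on the Morita Frobenius number. The Cartan invariants are controlled via standard estimates for blocks with abelian defect groups of bounded order (which in particular bound $k(B)$ and $\ell(B)$), so the problem reduces to a uniform bound on the Morita Frobenius number. By lifting to $\cO$ and invoking the corresponding $\cO$-version of Theorem \ref{normal_index_p_theorem}, this in turn reduces to a uniform bound on $\SF(B)$ as $G$ varies.

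Second, I would appeal to K\"ulshammer's reductions: they show that Donovan's conjecture for defect group $P$ over $k$ follows once it is established for blocks where $G$ is built from an (almost) quasisimple group by successive normal extensions, and in particular by normal subgroups of index $p$. At each such index-$p$ step Theorem \ref{normal_index_p_theorem} gives $\SF(B) \leq \SF(b)$, so a uniform bound at the base propagates up the tower without loss. The base case consists of blocks of quasisimple (or closely related) groups whose defect group is an abelian $2$-group of rank at most $4$, or of order at most $2^6$; here I would invoke CFSG together with the existing detailed literature on $2$-blocks of symmetric, alternating, sporadic and Lie-type quasisimple groups to enumerate finitely many Morita equivalence classes, each with bounded $\SF$. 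When $P$ is elementary abelian this is exactly \cite{ekks14}; the new cases are those in which $P$ has at least one cyclic factor of order $\geq 4$, for instance $C_4 \times C_4$, $C_8 \times C_2$, $C_4\times C_2\times C_2$, $C_{16}\times C_4$ and their analogues within the prescribed bounds.

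The main obstacle I expect is the explicit case-by-case inspection of quasisimple groups admitting these non-elementary-abelian defect groups, and the verification of a uniform $\SF$ bound on each resulting Morita class. The previous obstruction in \cite{ekks14} was precisely the inability to pass through normal subgroups of index $p$ outside the split-extension setting of \cite{kk96}; Theorem \ref{normal_index_p_theorem} removes this, so what remains is a careful but finite analysis of the possibilities allowed by the rank and order constraints, combined with bookkeeping to match the $k$-level conclusions of \cite{ku95} with the $\cO$-level input of the strong Frobenius number.
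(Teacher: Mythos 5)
Your outline correctly identifies the high-level ingredients (reduce via \cite{ke04} to bounding Cartan invariants and the Frobenius-type invariant, lift to $\cO$ so that Theorem \ref{normal_index_p_theorem} applies, reduce to quasisimple components via CFSG), but there is a genuine gap: you treat the tower from the quasisimple layer up to $G$ as if it could be climbed entirely by index-$p$ steps, and this is false. After the reduction of Proposition \ref{reduce} (which comes from \cite{ekks14}, via D\"uvel and K\"ulshammer--Puig, not directly from \cite{ku95}), the group $G/F^*(G)$ is a solvable subgroup of $\Out(F^*(G))$, and Lemma \ref{outer} shows this quotient has a nontrivial $p'$-part intertwined with the $p$-part. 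Theorem \ref{normal_index_p_theorem} handles only the $p$-steps. The $p'$-index steps require an entirely separate mechanism, and this is what Section 4 of the paper supplies: Theorem \ref{normal_p'_index_theorem} (an $\SF$-tracking refinement of K\"ulshammer's crossed-product argument in \cite{ku95}) together with Corollary \ref{powers_to_centre:cor}, plus Lemma \ref{nil_covered} (which exploits Zhou's theorem on $p'$-extensions of inertial blocks) for the nilpotent-covered and inertial cases. Your proposal mentions \cite{ku95} but only as a reduction-to-quasisimple scaffold, not as the engine for the $p'$-steps, so as written your argument stalls at the first $p'$-index normal subgroup.

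You also underestimate how much of the proof is detailed structural case analysis rather than ``careful but finite bookkeeping.'' The paper must bound the number $t$ of components at $2$ using the rank constraint (this is where $\rk_2(P)\leq 5$ is used, via radical subgroups and \cite{ou95}), and then deploy Lemmas \ref{rank1or2} and \ref{index2nilpotent} to force either nilpotency (contradicting the reduced-pair hypothesis) or very restrictive direct-factor structure on $D\cap L$. The nilpotency dichotomy and the delicate handling of the $O_2(G)\neq 1$ cases (using Proposition \ref{central_p_subgroup}, which only works for $\SOF$ and not $\SF$) are where most of the work lies. Your plan omits all of this, and in particular it does not explain why the $|P|=64$, rank-$5$ case forces the introduction of Corollary \ref{powers_to_centre:cor}, which is precisely the point at which a new idea beyond \cite{ekks14} and Theorem \ref{normal_index_p_theorem} is needed.
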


The structure of the paper is as follows. In Section 2 we set up some notation and briefly discuss Morita equivalences. In Section 3 we introduce the strong Frobenius and strong $\cO$-Frobenius numbers, show that they are preserved under Morita equivalences between blocks defined over $\cO$, and show that they behave well under passing to a normal subgroup of index $p$. In Section 4 we consider the situation of a normal defect group containing the defect groups of the block under consideration and prove Corollary \ref{powers_to_centre:cor} which is necessary when proving Donovan's conjecture for blocks with abelian defect groups of order $64$. In Section 5 we give the preliminary results needed for the reduction and application of the classification of finite simple groups. In Section 6 we prove that Donovan's conjecture holds for abelian $2$-groups of rank at most $4$ and for abelian $2$-groups of order up to $64$.


\section{Morita equivalences}

In this section we briefly discuss some facts about Morita equivalences but we first set up some notation to be used throughout this paper. Let $G$ be a finite group and $B$ a block of $\cO G$. We write $kB$ for the block of $kG$ corresponding to $B$ and $KB$ for the $K$-subspace of $KG$ generated by $B$. We denote by $e_B\in\cO G$ the block idempotent corresponding to $B$ and $e_{kB}\in kG$ the block idempotent corresponding to $kB$. We denote by $\Irr(G)$ (respectively $\IBr(G)$) the set of irreducible characters (respectively Brauer characters) of $G$ and $\Irr(B)$ (respectively $\IBr(B)$) the subset of irreducible characters $\chi$ (respectively Brauer characters $\phi$) such that $\chi(e_B)\neq0$ (respectively $\phi(e_B)\neq 0$). For each $\chi\in\Irr(G)$ we denote by $e_\chi\in\overline{K}G$ the character idempotent corresponding to $\chi$, where $\overline{K}$ denotes the algebraic closure of $K$. Note that $\overline{K}B=\bigoplus_{\chi\in\Irr(B)}\overline{K}Ge_\chi$. If $A$ and $B$ are finitely generated $R$-algebras for $R \in \{ K,\cO ,k,\overline{K}\}$, we write $\mod(A)$ for the category of finitely generated $A$-modules and $A\sim_{\operatorname{Mor}} B$ if $\mod(A)$ and $\mod(B)$ are (Morita) equivalent as $R$-linear categories. A  basic idempotent $e$ of a finite dimensional $k$-algebra $A$ is an idempotent such that $eAe$ is a basic algebra of $A$.

It is well known that a Morita equivalence gives rise to an isomorphism of centres. For convenience we give some details of this here:

\begin{prop}\label{isom_centres}
Let $R\in\{K,\cO,k\}$.

(i) Let $A$ and $B$ be finite dimensional $R$-algebras and $M$ an $A$-$B$-bimodule inducing a Morita equivalence between $A$ and $B$. Then there exists an isomorphism $\phi:Z(A)\to Z(B)$ uniquely defined by $z.m=m.\phi(z)$ for all $z\in Z(A)$ and $m\in M$.

(ii) Suppose in particular that $e$ is a basic idempotent of $A$, $B=eAe$ and $M=Ae$, then the isomorphism of centres $\phi:Z(A)\to Z(eAe)$ is given by $z\mapsto eze$.

(iii) If $R=\cO $ and $A$ and $B$ are in fact both blocks of finite groups, then the induced isomorphism of centres $\overline{K}\otimes_{\cO }\phi:Z(\overline{K}A)\to Z(\overline{K}B)$ is defined by $e_\chi\mapsto e_\psi$, where $\chi\in\Irr(A)$ and $\psi\in\Irr(B)$ correspond under the Morita equivalence.
\end{prop}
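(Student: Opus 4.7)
The plan is to prove the three parts in turn.

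For part (i), I would invoke the standard characterisation of Morita equivalences via progenerators: since $M$ gives a Morita equivalence, the natural ring homomorphism $B^{\op}\to\End_A(M)$ sending $b$ to right multiplication by $b$ is an isomorphism (and symmetrically for the left action of $A$). For $z\in Z(A)$, left multiplication $\lambda_z\colon m\mapsto zm$ is both a left $A$-module endomorphism of $M$ (by centrality of $z$) and a right $B$-module endomorphism (by bimodule associativity). Applying the isomorphism $B^{\op}\cong\End_A(M)$ produces a unique $\phi(z)\in B$ with $zm=m\phi(z)$, and the fact that $\lambda_z$ also commutes with left multiplication by every $a\in A$ translates to $\phi(z)$ commuting with every element of $B$, so $\phi(z)\in Z(B)$. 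Applying the same argument to the inverse bimodule supplies an inverse to $\phi$, yielding the ring isomorphism.

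Part (ii) is a direct verification. For $z\in Z(A)$ and $a\in A$, using $ze=ez$ and $e^2=e$ one computes
\[
z\cdot(ae)=a(ze)=a(ez)=aez \quad\text{and}\quad (ae)(eze)=aeze=a(ez)e=aez,
\]
so $\phi(z)=eze$ satisfies the defining relation from (i); a parallel calculation using centrality shows $eze$ commutes with every $eae\in eAe$, so it does lie in $Z(eAe)$.

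For part (iii), I would extend scalars to $\overline K$: the bimodule $\overline K\otimes_{\cO}M$ induces a Morita equivalence between the semisimple $\overline K$-algebras $\overline KA$ and $\overline KB$, and $\overline K\otimes_{\cO}\phi$ is an isomorphism $Z(\overline KA)\to Z(\overline KB)$ permuting primitive central idempotents. Writing $\overline KA=\bigoplus_{\chi\in\Irr(A)}\overline KAe_\chi$, each $e_\chi$ is primitive central, so $\phi(e_\chi)=e_\psi$ for some $\psi\in\Irr(B)$. To identify $\psi$, I would use that the defining relation $zm=m\phi(z)$ forces, for every $\overline KA$-module $X$, the $A$-endomorphism $x\mapsto zx$ of $X$ to correspond, under the induced functor $F$, to the $B$-endomorphism $y\mapsto\phi(z)y$ of $F(X)$ (immediate, for instance, with $F=\Hom_{\overline KA}(\overline KM,-)$). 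Applied to a simple module $S_\chi$: since $e_\chi$ acts as the identity on $S_\chi$ and annihilates all other simples, $\phi(e_\chi)$ acts as the identity on $F(S_\chi)$ and annihilates all other simples, which pins it down as the primitive central idempotent indexed by the irreducible character afforded by $F(S_\chi)$, i.e.\ the $\psi$ corresponding to $\chi$.

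The only real obstacle is conceptual rather than computational: one must be careful to pin down what ``correspond under the Morita equivalence'' means, which is precisely the bijection of simple modules after base change to $\overline K$. Once this is fixed, part (iii) is formal, and the proposition as a whole is essentially folklore, admitting a short proof.
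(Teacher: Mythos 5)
Your argument is correct and matches the paper's in structure: both cite (i) as standard, verify (ii) by the same one-line computation, and deduce (iii) after base change to $\overline{K}$ by invoking the defining relation from (i). The only variation is in (iii): the paper pins down $\phi(e_\chi)=e_{\chi_M}$ by writing out the Wedderburn decomposition $\overline{K}M\cong\bigoplus_\chi V_\chi\otimes_{\overline{K}}\Hom_{\overline{K}}(V_{\chi_M},\overline{K})$ and reading off which right-action idempotent matches the left action of $e_\chi$, whereas you transport the central endomorphism $x\mapsto e_\chi x$ through the equivalence functor $F=\Hom_{\overline{K}A}(\overline{K}M,-)$ and evaluate on simples. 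These are the same computation phrased on opposite sides of the module--bimodule dictionary, so the proposal is essentially the paper's proof.
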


\begin{proof}
(i) This is well known.

(ii) Certainly $zae=ae(eze)$ for all $z\in Z(A)$ and $a\in A$ and the claim follows from part (i).

(iii) $\overline{K}M$ induces a Morita equivalence between $\overline{K}A$ and $\overline{K}B$. So
\begin{align*}
\overline{K}M\cong\bigoplus_{\chi\in\Irr(A)}V_\chi\otimes_{\overline{K}}\Hom_{\overline{K}}(V_{\chi_M},\overline{K}),
\end{align*}
where $\chi_M\in\Irr(B)$ corresponds to $\chi$ through the Morita equivalence and $V_\chi$ and $V_{\chi_M}$ are simple left $\overline{K}A$ and $\overline{K}B$-modules corresponding to $\chi$ and $\chi_M$ respectively. The claim now follows by part (i).
\end{proof}

The following is proved in~\cite[Lemma 3.2]{kl17} but with the assumptions that $R=k$ and that $A$ and $B$ are a priori isomorphic as rings.

\begin{prop}
\label{morita_implies_iso}
Let $G$ and $H$ be finite groups, $R\in\{\cO,k\}$ and $A$ and $B$ blocks of $RG$ and $RH$ respectively. Suppose there exists an $A$-$B$-bimodule $M$ inducing a Morita equivalence between $A$ and $B$, so $M$ is a progenerator for $B^{\op}$ and $\End_{B^{\op}}(M)\cong A$ via this bimodule structure. Suppose further that $\dim_k(M\otimes_BS)=\dim_k(S)$ for all simple $kB$-modules $S$. Then $A$ and $B$ are isomorphic as $R$-algebras.
\end{prop}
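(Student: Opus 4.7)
The plan is to strengthen the conclusion and show $M \cong B$ as right $B$-modules. Once this is done, the hypothesis $A \cong \End_{B^{\op}}(M)$ combined with $\End_{B^{\op}}(B) \cong B$ (given by left multiplication) immediately gives $A \cong B$ as $R$-algebras.

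First I would reduce to the case $R = k$. As a progenerator for $B^{\op}$, $M$ is in particular a finitely generated projective right $B$-module, and $\cO$-free since $B$ is. By the standard lifting theorem for projective modules over $\cO$-orders (a consequence of lifting idempotents), two such modules are isomorphic if and only if their reductions modulo the maximal ideal of $\cO$ are isomorphic as right $kB$-modules. Tensoring with $k$ over $\cO$ preserves the Morita equivalence, and the dimension hypothesis is already phrased at the $k$-level, so it suffices to treat $R = k$.

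In this case, I would pin down the decomposition of $M$ into indecomposable projective right $B$-modules. Write $M \cong \bigoplus_i P_i^{m_i}$ with $P_i = e_i B$ for a set of representatives $\{e_i\}$ of conjugacy classes of primitive idempotents; the generator condition forces $m_i \geq 1$. For the simple left $B$-module $S_j$ corresponding to the idempotent $e_j$, the routine computation $P_i \otimes_B S_j = e_i S_j$ has $k$-dimension $\delta_{ij}$ (using that $k$ is algebraically closed to identify $e_i S_j$ with $\Hom_B(Be_i, S_j)$). Hence $\dim_k(M \otimes_B S_j) = m_j$, and the hypothesis forces $m_j = \dim_k S_j$. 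Since $B/J(B) \cong \bigoplus_i \Mat_{\dim_k S_i}(k)$, the decomposition of $B$ as a right $B$-module is $B \cong \bigoplus_i P_i^{\dim_k S_i}$, matching that of $M$ term by term.

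The only non-trivial step is the reduction from $\cO$ to $k$, but this rests on well-known lifting machinery for projective modules over local orders; the $k$-level argument is then essentially bookkeeping with the Wedderburn decomposition of $B/J(B)$ and the elementary tensor-product computation above.
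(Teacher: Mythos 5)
Your argument is correct and essentially reproduces the paper's proof: both show $M \cong B$ as right $B$-modules by using the hypothesis $\dim_k(M\otimes_B S)=\dim_k(S)$ to pin down the multiplicities of the indecomposable projective summands of $M$, and then conclude via $A \cong \End_{B^{\op}}(M) \cong \End_{B^{\op}}(B) \cong B$. The only organizational difference is your intermediate reduction to $R=k$ via the lifting theorem for projectives over $\cO$-orders; the paper avoids this by writing $M \cong \bigoplus_i \Hom_R(P_i,R)^{\oplus a_i}$ for $P_i$ the indecomposable projective left $B$-modules and observing that the multiplicity computation $\dim_k\bigl(\Hom_R(P_i,R)\otimes_B S_j\bigr)=\delta_{ij}$ already holds uniformly for $R\in\{\cO,k\}$, making the detour through $k$ unnecessary.
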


\begin{proof}
As a right $B$-module
\begin{align*}
M\cong\Hom_R(P_1,R)^{\oplus a_1}\oplus\dots\oplus\Hom_R(P_t,R)^{\oplus a_t},
\end{align*}
where $P_1,\dots,P_t$ is a complete set of isomorphism classes of indecomposable projective left $B$-modules and $a_i\in\mathbb{N}$. Let $S_i$ be the simple $kB$-module corresponding to $P_i$. Note that
\begin{align*}
\dim_k(\Hom_R(P_i,R)\otimes_BS_j)=\delta_{ij}.
\end{align*}
Therefore
\begin{align*}
a_j=\dim_k(M\otimes_BS_j)=\dim_k(S_j),
\end{align*}
for all $1\leq j\leq t$. So $M\cong B$ as a right $B$-module. Therefore we have the following isomorphism of $R$-algebras:
\begin{align*}
B\cong\End_{B^{\op}}(B)\cong\End_{B^{\op}}(M)\cong A.
\end{align*}
\end{proof}



\section{The strong Frobenius number of a block}

In this section we define the strong Frobenius number of a block and we will see that the definition is natural when considering blocks defined over $\cO$. The main point of this definition is that it allows us to apply results in~\cite{el17} to prove Theorem \ref{normal_index_p_theorem}, crucial in reducing Donovan's conjecture to blocks of quasisimple groups.

\begin{defi}
If $G$ is a finite group and $g\in G$, then we define $g_p,g_{p'}\in G$ by $g=g_pg_{p'}$ with $g_p,g_{p'}\in C_G(g)$ and $g_p$ of order a power of $p$ and $g_{p'}$ of $p'$-order. Similarly if $\omega\in\overline{K}$ is a root of unity then we define $\omega_p$ and $\omega_{p'}$ by considering $\omega$ as an element of the finite group $\langle\omega\rangle$.
\end{defi}

Following~\cite{bk07} we define some field automorphisms of $k$ and use them to define twists of algebras.

\begin{defi}
Let $q$ be a, possibly zero or negative, power of $p$. We denote by $-^{(q)}:k\to k$ the field automorphism given by $\lambda\mapsto\lambda^{\frac{1}{q}}$. Let $A$ be a $k$-algebra. We define $A^{(q)}$ to be the $k$-algebra with the same underlying ring structure as $A$ but with a new action of the scalars given by $\lambda.a=\lambda^{(q)}a$, for all $\lambda\in k$ and $a\in A$. For $a\in A$ we define $a^{(q)}$ to be the element of $A$ associated to $a$ through the ring isomorphism between $A$ and $A^{(q)}$. For $M$ an $A$-module we define $M^{(q)}$ to be the $A^{(q)}$-module associated to $M$ through the ring isomorphism between $A$ and $A^{(q)}$.

Note that for $G$ a finite group, we have $kG\cong kG^{(q)}$ as we can identify $-^{(q)}:kG\to kG$ with the ring isomorphism:
\begin{align*}
-^{(q)}:kG&\to kG\\
\sum_{g\in G}\alpha_gg&\mapsto\sum_{g\in G}(\alpha_g)^qg.
\end{align*}
If $B$ is a block of $kG$ then we can and do identify $B^{(q)}$ with the image of $B$ under the above isomorphism.

By an abuse of notation we also use $-^{(q)}$ to denote the field automorphism of the universal cyclotomic extension of $\mathbb{Q}$ defined by $\omega_p\omega_{p'}\mapsto\omega_p\omega_{p'}^{\frac{1}{q}}$, for all $p^{\nth}$-power roots of unity $\omega_p$ and $p'^{\nth}$ roots of unity $\omega_{p'}$. If $\chi\in\Irr(G)$, then we define $\chi^{(q)}\in\Irr(G)$ to be given by $\chi^{(q)}(g)=\chi(g)^{(q^{-1})}$ for all $g\in G$ and we define $\varphi^{(q)}$ for $\varphi\in\IBr(G)$ in an analogous way. Note that if $S$ is a simple $kG$ module with Brauer character $\varphi$, then $S^{(q)}$ is a simple $kG$ module with Brauer character $\varphi^{(q)}$. If $B$ is a block of $\cO G$ with $\chi\in\Irr(B)$, then we define $B^{(q)}$ to be the block of $\cO G$ with $\chi^{(q)}\in\Irr(B^{(q)})$. By considering a simple $B$-module and the decomposition matrix of $\cO G$ we see that $(kB)^{(q)}=k(B^{(q)})$, in particular $B^{(q)}$ is well-defined.
\end{defi}

We now define the Frobenius number and the Morita Frobenius number of a finite dimensional $k$-algebra as in~\cite{bk07}. We also define a new invariant called the $\cO$-Frobenius number of a block of $\cO G$, where $G$ is a finite group.

\begin{defi}
The \textbf{Frobenius number} of a finite dimensional $k$-algebra $A$ is the smallest integer $n$ such that $A\cong A^{(p^n)}$ as $k$-algebras and the \textbf{Morita Frobenius number} $\mf(A)$ of $A$ is the smallest integer $n$ such that $A\sim_{\operatorname{Mor}}A^{(p^n)}$ as $k$-algebras. The \textbf{$\cO$-Frobenius number} $\OF(B)$ of a block $B$ of $\cO G$, where $G$ is a finite group, is the smallest integer $n$ such that $A\cong A^{(p^n)}$ as $\cO$-algebras.
\end{defi}

We now work towards a more restrictive notion of isomorphism between a block $B$ and $B^{(p^n)}$.

\begin{defi}
Let $G$ be a finite group. We define the $\mathbb{Z}$-linear map
\begin{align*}
\zeta_G:\mathbb{Z}G&\to\mathbb{Z}G\\
g&\mapsto g_pg_{p'}^{\frac{1}{p}}.
\end{align*}
\end{defi}

\begin{prop}\label{strong_isomorphism}
Let $G$ be a finite group. Then $\zeta_G$ restricted to $Z(\mathbb{Z}G)$ induces an algebra automorphism of $Z(\mathbb{Z}G)$. Furthermore the algebra automorphism induced on $Z(\overline{K}G)\cong \overline{K}\otimes_{\mathbb{Z}}Z(\mathbb{Z}G)$ sends $e_\chi$ to $e_{\chi^{(p)}}$ for all $\chi\in\operatorname{Irr}(G)$.
\end{prop}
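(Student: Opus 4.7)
The plan is to first show $\zeta_G$ restricts to a $\mathbb{Z}$-linear bijection of $Z(\mathbb{Z}G)$ by a conjugacy-class argument, then verify after scalar extension that the induced map sends $e_\chi$ to $e_{\chi^{(p)}}$, and thereby conclude the ring-automorphism property from the fact that it permutes a complete set of primitive central idempotents of $Z(\overline{K}G)$. For the first step I would observe that if $g = g_pg_{p'}$ then $h := \zeta_G(g) = g_pg_{p'}^{1/p}$ is a product of commuting factors with $h_p = g_p$ of $p$-power order and $h_{p'} = g_{p'}^{1/p}$ of $p'$-order, so $h \mapsto h_ph_{p'}^p$ inverts $\zeta_G$ on $G$. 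Since conjugation preserves the $p$/$p'$-decomposition and commutes with $p$-th roots inside cyclic $p'$-groups, we have $\zeta_G(xgx^{-1}) = x\zeta_G(g)x^{-1}$, so $\zeta_G$ permutes the conjugacy classes of $G$ and its linear extension restricts to a $\mathbb{Z}$-linear bijection of $Z(\mathbb{Z}G)$.

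For the second step, I would apply $\zeta_G$ to $e_\chi = \frac{\chi(1)}{|G|}\sum_{g\in G}\chi(g^{-1})g$ and reindex by $h = \zeta_G(g)$ (so $g^{-1} = h_p^{-1}h_{p'}^{-p}$) to obtain
\begin{align*}
\zeta_G(e_\chi) = \frac{\chi(1)}{|G|}\sum_{h\in G}\chi(h_p^{-1}h_{p'}^{-p})\,h.
\end{align*}
Since $\chi^{(p)}(1) = \chi(1)$, the claim $\zeta_G(e_\chi) = e_{\chi^{(p)}}$ reduces to the pointwise identity $\chi(h_p^{-1}h_{p'}^{-p}) = \chi(h^{-1})^{(p^{-1})}$ for each $h \in G$. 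Diagonalising the representation affording $\chi$ on the abelian subgroup $\langle h_p, h_{p'}\rangle$, I would write the eigenvalues of $h$ as products $\mu_i\nu_i$ with $\mu_i$ a $p$-power root of unity and $\nu_i$ a $p'$-root; the left side then equals $\sum_i \mu_i^{-1}\nu_i^{-p}$ directly, while the definition of $-^{(p^{-1})}$ (which fixes $\mu_i^{-1}$ and sends $\nu_i^{-1}$ to $\nu_i^{-p}$) gives the same expression for the right side.

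Once $\zeta_G(e_\chi) = e_{\chi^{(p)}}$ is established, the bijection $\chi \mapsto \chi^{(p)}$ of $\Irr(G)$ forces $\overline{K}\otimes_{\mathbb{Z}}\zeta_G$ to permute the primitive orthogonal central idempotents of $Z(\overline{K}G)$, hence to be an algebra automorphism of $Z(\overline{K}G)$, and by restriction an algebra automorphism of $Z(\mathbb{Z}G)$. The main technical point is the character-eigenvalue identity in the second step: one must check carefully that $-^{(p^{-1})}$ fixes the $p$-parts while raising $p'$-parts to the $p$-th power, which is exactly what matches the effect of passing from $h^{-1}$ to $h_p^{-1}h_{p'}^{-p}$ inside $\chi$.
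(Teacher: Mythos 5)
Your proposal is correct and follows essentially the same approach as the paper: show $\zeta_G$ intertwines conjugation so that it permutes conjugacy-class sums and hence preserves $Z(\mathbb{Z}G)$, then use the eigenvalue decomposition of $\rho(g)$ on the abelian group $\langle g_p,g_{p'}\rangle$ to establish the character identity, and finally read off that the primitive central idempotents of $Z(\overline{K}G)$ are permuted, forcing $\zeta_G$ to be an algebra automorphism. The only cosmetic differences are that you obtain bijectivity by exhibiting the explicit inverse $h\mapsto h_p h_{p'}^p$ rather than noting $\zeta_G$ has finite order, and you compute $\zeta_G(e_\chi)$ by reindexing the defining sum rather than pairing $\zeta_G(e_\chi)$ against each $\psi^{(p)}$ and using the orthogonality $\psi(e_\chi)=\chi(1)\delta_{\chi,\psi}$; the underlying eigenvalue identity is the same as the paper's $\chi^{(p)}(\zeta_G(g))=\chi(g)$.
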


\begin{proof}
We first show that the image of $Z(\mathbb{Z}G)$ under $\zeta_G$ is contained in $Z(\mathbb{Z}G)$. One can quickly check that $\zeta_G(hgh^{-1})=h\zeta_G(g)h^{-1}$ for all $g,h\in G$ and so the sum of elements in the conjugacy class of $G$ containing $g$ is sent to the sum of elements in the conjugacy class of $G$ containing $\zeta_G(g)$. Therefore, $\zeta_G$ maps $Z(\mathbb{Z}G)$ to itself.
\newline
\newline
Next note that $\zeta_G^{\circ n}$ is the identity on all of $\mathbb{Z}G$, where $n$ is the multiplicative order of $p\mod|G|_{p'}$, so $\zeta_G|_{Z(\mathbb{Z}G)}:Z(\mathbb{Z}G)\to Z(\mathbb{Z}G)$ is an isomorphism of $\mathbb{Z}$-modules.
\newline
\newline
Now if $g\in G$ and $\chi \in\operatorname{Irr}(G)$ then by considering the eigenvalues of $\rho(g)$ for $\rho$ a representation of $G$ affording $\chi$, we see that $\chi^{(p)}(\zeta_G(g))=\chi(g)$. Therefore if we extend $\zeta_G$ to the $\overline{K}$-linear isomorphism $Z(\overline{K}G)\to Z(\overline{K}G)$, then for all $\chi, \psi \in \operatorname{Irr}(G)$
\begin{align*}
\psi^{(p)}(\zeta_G(e_\chi))=\psi(e_\chi)=\chi(1)\delta_{\chi,\psi}=\chi^{(p)}(1)\delta_{\chi^{(p)},\psi^{(p)}}=\psi^{(p)}(e_{\chi^{(p)}})
\end{align*}
and since the $e_\chi$'s form a basis for $Z(\overline{K}G)$ we must have $\zeta_G(e_\chi)=e_{\chi^{(p)}}$. This clearly defines an algebra automorphism on $Z(\overline{K}G)$ and so also on $Z(\mathbb{Z}G)$.
\end{proof}

By an abuse of notation we use $\zeta_G$ to simultaneously denote the algebra automorphisms of $Z(KG)$, $Z(\cO G)$, $Z(kG)$, $Z(\overline{K}G)$ and $Z(\mathbb{Z}G)$. The following corollary is a trivial consequence of the above proposition.

\begin{cor}
If $G$ is a finite group and $B$ a block of $kG$ or $\cO G$, then $\zeta_G$ induces an isomorphism from $Z(B)$ to $Z(B^{(p)})$.
\end{cor}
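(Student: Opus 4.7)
The plan is to deduce the corollary directly from Proposition \ref{strong_isomorphism} by noting that $Z(B)$ is obtained from $Z(RG)$ ($R = k$ or $\cO$) by multiplication by the block idempotent $e_B$, and that $\zeta_G$ sends $e_B$ to $e_{B^{(p)}}$.

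First I would record that, by Proposition \ref{strong_isomorphism}, $\zeta_G$ is an algebra automorphism of $Z(\mathbb{Z}G)$, hence of $Z(RG)$ for any $R \in \{K, \cO, k, \overline{K}\}$ by extending scalars. In particular, for $R \in \{\cO,k\}$, the restriction $\zeta_G|_{Z(RG)}$ is an algebra automorphism.

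Next I would compute $\zeta_G(e_B)$. The block idempotent satisfies
\begin{align*}
e_B = \sum_{\chi \in \operatorname{Irr}(B)} e_\chi \in Z(\overline{K}G),
\end{align*}
and by the definition of $B^{(p)}$ we have $\operatorname{Irr}(B^{(p)}) = \{\chi^{(p)} : \chi \in \operatorname{Irr}(B)\}$. Applying Proposition \ref{strong_isomorphism} termwise,
\begin{align*}
\zeta_G(e_B) = \sum_{\chi \in \operatorname{Irr}(B)} \zeta_G(e_\chi) = \sum_{\chi \in \operatorname{Irr}(B)} e_{\chi^{(p)}} = e_{B^{(p)}}.
\end{align*}
This computation takes place in $Z(\overline{K}G)$, but since both $e_B$ and $e_{B^{(p)}}$ lie in $Z(RG)$ and $\zeta_G|_{Z(RG)}$ is defined as the appropriate restriction, the identity $\zeta_G(e_B) = e_{B^{(p)}}$ holds in $Z(RG)$ as well.

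Finally, since $\zeta_G$ is an algebra automorphism of $Z(RG)$ sending the central idempotent $e_B$ to the central idempotent $e_{B^{(p)}}$, it restricts to an algebra isomorphism
\begin{align*}
\zeta_G : Z(B) = e_B Z(RG) \;\longrightarrow\; e_{B^{(p)}} Z(RG) = Z(B^{(p)}),
\end{align*}
which is the claim. There is no real obstacle here; the only point requiring any care is to recognise that although $\zeta_G$ was initially defined on $\mathbb{Z}G$, its extension is consistent across $\overline{K}G$, $KG$, $\cO G$ and $kG$, so that the identity $\zeta_G(e_B) = e_{B^{(p)}}$ established via character idempotents transfers to the $R$-integral setting.
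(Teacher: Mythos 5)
Your proof is correct and is exactly the argument the paper has in mind when it calls the corollary a ``trivial consequence'' of Proposition \ref{strong_isomorphism}: $\zeta_G$ is an algebra automorphism of $Z(RG)$, the decomposition $e_B = \sum_{\chi\in\Irr(B)} e_\chi$ in $Z(\overline{K}G)$ together with $\zeta_G(e_\chi)=e_{\chi^{(p)}}$ gives $\zeta_G(e_B)=e_{B^{(p)}}$, and an algebra automorphism carrying $e_B$ to $e_{B^{(p)}}$ restricts to an isomorphism $e_B Z(RG) \to e_{B^{(p)}} Z(RG)$. One small remark: the claim that $\Irr(B^{(p)}) = \{\chi^{(p)} : \chi\in\Irr(B)\}$ is not quite ``by definition'' (the definition only pins down $B^{(p)}$ via a single character), but your own computation in fact establishes it, since $\zeta_G(e_B)$ is a primitive idempotent of $Z(\cO G)$ and equals $\sum_{\chi\in\Irr(B)} e_{\chi^{(p)}}$, forcing this to be the full set of characters in the block containing any one $\chi^{(p)}$; and for $R=k$ the identity $\zeta_G(e_{kB})=e_{kB^{(p)}}$ is obtained by reducing the $\cO$-identity modulo the maximal ideal, since $\zeta_G$ visibly commutes with reduction.
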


\begin{lem}\label{lem:transfer}
Let $G$ be a finite group, $H\leq G$ and $R\in\{K,\cO ,k,\mathbb{Z},\overline{K}\}$. Then
\begin{align*}
\zeta_G\circ\Tr_H^G=\Tr_H^G\circ \zeta_H:Z(RH)\to Z(RG),
\end{align*}
where $\Tr_H^G:Z(RH)\to Z(RG)$ denotes the transfer map.
\end{lem}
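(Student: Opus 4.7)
The plan is to write out both sides using the explicit coset-sum formula for the transfer map and reduce everything to two elementary facts about $\zeta$: its compatibility with conjugation and the fact that its restriction to a subgroup ring equals the $\zeta$ of that subgroup.

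First I would recall that for $z \in Z(RH)$ one has $\Tr_H^G(z)=\sum_{g\in[G/H]} gzg^{-1}$, a formula that makes sense uniformly for all $R\in\{K,\cO,k,\mathbb{Z},\overline{K}\}$, and that this element is indeed central in $RG$. Next, since $\zeta_G$ is defined as the $R$-linear extension of a map on group elements, it suffices to verify the identity on a general element $z=\sum_{h\in H} a_h h \in Z(RH)$.

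The two key observations I would now record are: (a) $\zeta_G$ commutes with conjugation in $G$, i.e.\ $\zeta_G(xyx^{-1})=x\zeta_G(y)x^{-1}$ for all $x,y\in G$, which is immediate from $(xyx^{-1})_p=xy_px^{-1}$ and $(xyx^{-1})_{p'}=xy_{p'}x^{-1}$ together with the fact that raising to the $1/p$ power inside a cyclic $p'$-group is compatible with conjugation; and (b) for $h\in H$ the elements $h_p,h_{p'}\in\langle h\rangle\subseteq H$, so $\zeta_G(h)=h_ph_{p'}^{1/p}=\zeta_H(h)$, i.e.\ $\zeta_G$ restricted to $RH$ coincides with $\zeta_H$.

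Combining these gives
\begin{align*}
\zeta_G(\Tr_H^G(z)) &= \sum_{g\in[G/H]}\sum_{h\in H} a_h\,\zeta_G(ghg^{-1}) \\
&= \sum_{g\in[G/H]}\sum_{h\in H} a_h\, g\zeta_H(h)g^{-1} \\
&= \sum_{g\in[G/H]} g\zeta_H(z)g^{-1} \\
&= \Tr_H^G(\zeta_H(z)),
\end{align*}
where the last equality uses Proposition \ref{strong_isomorphism} to know that $\zeta_H(z)\in Z(RH)$, so that the coset-sum formula again computes the transfer. There is no real obstacle; the one point to be mildly careful about is observation (b), which ensures that one never needs to compare $p'$-parts computed in different ambient groups, and this is clean because the $p$- and $p'$-parts of an element depend only on the cyclic group it generates.
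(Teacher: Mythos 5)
Your proof is correct, and it takes essentially the same route as the paper's: both reduce the identity to (a) the conjugation-equivariance $\zeta_G(ghg^{-1})=g\zeta_G(h)g^{-1}$ and (b) the compatibility $\zeta_G|_{RH}=\zeta_H$. The only real difference is the parametrization of the transfer map. The paper verifies the identity on $H$-class sums, using the formula $\Tr_H^G\bigl(\sum_{x\in C_h^H}x\bigr)=[C_G(h):C_H(h)]\sum_{x\in C_h^G}x$, which forces them to also invoke the equality of centralizers $C_G(g)=C_G(\zeta_G(g))$ to see that the multiplicative constants match on both sides. You instead work with the raw coset-sum formula $\Tr_H^G(z)=\sum_{g\in[G/H]}gzg^{-1}$, which sidesteps the centralizer-index bookkeeping entirely and lets the whole argument run at the level of a single element of $RG$; this is a mild simplification. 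One cosmetic remark: since $\zeta_G$ is by definition the $R$-linear extension of a map on group elements (and is defined on all of $RG$, not only $Z(RG)$), your element-wise expansion of $\zeta_G(\Tr_H^G(z))$ is legitimate, and your appeal to Proposition \ref{strong_isomorphism} to ensure $\zeta_H(z)\in Z(RH)$ before reassembling the sum as a transfer is exactly the right closing step.
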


\begin{proof}
If $g\in G$ then we denote by $C_g^G$ the conjugacy class of $G$ containing $g$ with a similar notation for $h\in H$. Now if $h\in H$, then 
\begin{align*}
&\zeta_G\circ\Tr_H^G\left(\sum_{x\in C_h^H}x\right)=\zeta_G\left([C_G(h):C_H(h)]\sum_{x\in C_h^G}x\right)=[C_G(h):C_H(h)]\sum_{x\in C_{\zeta_G(h)}^G}x\\
=&[C_G(\zeta_H(h)):C_H(\zeta_H(h))]\sum_{x\in C_{\zeta_H(h)}^G}x=\Tr_H^G\left(\sum_{x\in C_{\zeta_H(h)}^H}x\right)=\Tr_H^G\circ \zeta_H\left(\sum_{x\in C_h^H}x\right),
\end{align*}
where the third equality follows from the fact that $\zeta_G(h)=\zeta_H(h)$ and that $C_G(g)=C_G(\zeta_G(g))$ for all $g\in G$.
\end{proof}

\begin{defi}\label{strong_frobenius_definition}
Let $G$ be a finite group, $B$ a block of $kG$ (respectively $\cO G$) and $n\in\mathbb{N}$. We define a \textbf{strong Frobenius isomorphism} (respectively \textbf{strong $\cO$-Frobenius isomorphism}) of degree $n$ to be an isomorphism from $\phi:B\to B^{(p^n)}$ with the following properties:
\begin{enumerate}
\item $\phi|_{Z(B)}$ coincides with $\zeta_G^{\circ n}|_{Z(B)}$,
\item for every simple $B$-module $S$, the simple $B^{(p^n)}$-module associated to $S$ through $\phi$ is isomorphic to $S^{(p^n)}$.
\end{enumerate}
We define the \textbf{strong Frobenius number} $\SF(B)$ (respectively \textbf{strong $\cO$-Frobenius number} $\SOF(B)$) of $B$ to be the smallest $n\in\mathbb{N}$ such that there exists a strong Frobenius isomorphism (respectively strong $\cO$-Frobenius isomorphism) of degree $n$ from $B$ to $B^{(p^n)}$.
\end{defi}

\begin{rem}\label{remark_strong_frobenius}
Note that when $B$ is a block of $\cO G$ (as opposed to $kG$), condition (2) is unnecessary. This is because by Proposition~\ref{strong_isomorphism} condition (1) is equivalent to $\overline{K}\otimes_{\cO}\phi(e_\chi)=e_\chi^{(p^n)}$ for all $\chi\in\Irr(B)$. Condition (2) is then automatic by the surjectivity of the decomposition matrix.

If $B$ is a block of $\cO G$ and $\phi:B\to B^{(p^n)}$ is a strong $\cO$-Frobenius isomorphism of degree $n$, then $k\otimes_{\cO}\phi:kB\to kB^{(p^n)}$ is a strong Frobenius isomorphism of degree $n$ and so $\SF(kB)\leq\SOF(B)$.

Note that the degree of a strong Frobenius isomorphism is important. Say $B=B^{(p^n)}$, which will always be the case for some $n\in\mathbb{N}$, then a $k$-algebra isomorphism $B\to B^{(p^n)}$, could potentially be a strong Frobenius isomorphism of degree $n$ or $2n$ or indeed any other positive multiple of $n$. Furthermore the condition that it is a strong Frobenius isomorphism depends on this degree. However, when the context is clear we will omit the degree of the strong Frobenius isomorphism.
\end{rem}

Before we prove various results about the strong Frobenius number we set up some notation.

\begin{defi}
Let $G$ be a finite group and $B$ a block of $kG$ or $\cO G$. If we have a $k$-linear (respectively $\cO $-linear) isomorphism $\phi:B\to B^{(p^n)}$, then we define $\phi^{(p^n)}:B^{(p^n)}\to B^{(p^{2n})}$ by the following commutative diagram:
\begin{align*}
\begin{CD}
B
@>\phi>>
B^{(p^n)}\\
@VV(p^n)V @VV(p^n)V\\
B^{(p^n)}
@>\phi^{(p^n)}>>
B^{(p^{2n})},
\end{CD}
\end{align*}
where the vertical arrows are given by the natural ring isomorphisms between $B$ and $B^{(p^n)}$ (respectively $B^{(p^n)}$ and $B^{(p^{2n})}$). Now for $t\in\mathbb{N}$ we define $\phi^{\circ t}:B\to B^{(p^{tn})}$ to be the composition of isomorphisms:
\begin{align*}
\begin{CD}
\phi^{\circ t}:B
@>\phi>>
B^{(p^n)}
@>\phi^{(p^n)}>>
\dots
@>\phi^{(p^{(t-2)n})}>>
B^{(p^{(t-1)n})}
@>\phi^{(p^{(t-1)n})}>>
B^{(p^{tn})}.
\end{CD}
\end{align*}
\end{defi}

The following can be seen as a strong Frobenius version of~\cite[Lemma 3.3]{kl17}.

\begin{prop}
\label{sf_bound_with_iso}
Let $G$ be finite group and $B$ a block of $\cO G$ with defect group $D$ of order $p^d$. If we have an isomorphism $\phi:B\to B^{(p^n)}$, then $\phi^{\circ t}:B\to B^{(p^{tn})}$ is a strong $\cO$-Frobenius isomorphism, where $t=p^{2d}!$. Hence it induces a strong Frobenius isomorphism $kB\to kB^{(p^{tn})}$. In other words, $\SF(B)\leq\SOF(B)\leq t.\OF(B)$. In particular, if $B$ is a principal block, then $\SF(B)\leq\SOF(B)\leq t$.
\end{prop}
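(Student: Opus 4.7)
The plan is to reduce the verification to an equality of primitive central idempotents in $\overline{K}G$ and then exploit the finiteness of $\Irr(B)$.

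First, by Remark \ref{remark_strong_frobenius} and Proposition \ref{strong_isomorphism}, to prove that $\phi^{\circ t}$ is a strong $\cO$-Frobenius isomorphism of degree $tn$ it suffices to check that $\overline{K}\otimes_\cO\phi^{\circ t}(e_\chi)=e_{\chi^{(p^{tn})}}$ for each $\chi\in\Irr(B)$. Since the primitive central idempotents of $\overline{K}B$ and $\overline{K}B^{(p^n)}$ are $\{e_\chi\}_{\chi\in\Irr(B)}$ and $\{e_{\chi^{(p^n)}}\}_{\chi\in\Irr(B)}$ respectively, the $\overline{K}$-algebra isomorphism $\overline{K}\phi$ must induce a permutation $\pi$ of $\Irr(B)$ characterised by $\overline{K}\phi(e_\chi)=e_{\pi(\chi)^{(p^n)}}$. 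I would then prove by induction on $t$ that $\overline{K}\phi^{\circ t}(e_\chi)=e_{\pi^t(\chi)^{(p^{tn})}}$.

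The inductive step is the main computation. Using the commutative diagram defining $\phi^{(p^{tn})}$ together with the fact that the natural ring isomorphisms appearing as the vertical arrows restrict on centres to $\zeta_G^{\circ n}$ (which by Proposition \ref{strong_isomorphism} satisfies $\zeta_G^{\circ n}(e_\xi)=e_{\xi^{(p^n)}}$), one derives the centre identity $\phi^{(p^{tn})}|_{Z}=\zeta_G^{\circ tn}\circ\phi|_{Z}\circ(\zeta_G^{\circ tn})^{-1}$. Applying this to $e_{\pi^t(\chi)^{(p^{tn})}}$: first $(\zeta_G^{\circ tn})^{-1}$ strips the twist to yield $e_{\pi^t(\chi)}$, then $\overline{K}\phi$ sends this to $e_{\pi^{t+1}(\chi)^{(p^n)}}$, and finally $\zeta_G^{\circ tn}$ reinstates the twist to produce $e_{\pi^{t+1}(\chi)^{(p^{(t+1)n})}}$, closing the induction.

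Finally, the Brauer--Feit theorem bounds $|\Irr(B)|=k(B)\leq p^{2d}$, so $\pi$ lies in a symmetric group of order dividing $p^{2d}!$. Hence $\pi^t=\mathrm{id}$ when $t=p^{2d}!$, giving $\overline{K}\phi^{\circ t}(e_\chi)=e_{\chi^{(p^{tn})}}$ and thus the required strong $\cO$-Frobenius isomorphism. Taking $n=\OF(B)$ gives $\SOF(B)\leq t\cdot\OF(B)$, and $\SF(kB)\leq\SOF(B)$ follows from Remark \ref{remark_strong_frobenius}. For a principal block the trivial character is Galois-stable, forcing $\OF(B)=1$ and hence the sharper bound $\SOF(B)\leq t$. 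The main technical subtlety is confirming that the natural ring isomorphism in the commutative diagram does restrict to $\zeta_G^{\circ n}$ on the centre, which is precisely what licenses the commutation formula used in the inductive step.
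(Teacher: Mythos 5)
Your proposal is correct and takes essentially the same route as the paper's (very brief) proof: both invoke Brauer--Feit to bound $|\Irr(B)|\leq p^{2d}$ and observe that the permutation of $\Irr(B)$ induced by $\phi^{\circ t}$ is the $t$th power of a fixed permutation $\pi$, hence trivial when $t=p^{2d}!$; the paper compresses this to the parenthetical ``as permutation of the characters must have order dividing $t$'' whereas you spell out the induction. On the subtlety you flag at the end: the vertical arrows do not literally restrict on centres to $\zeta_G^{\circ n}$ (the natural ring isomorphism is $\overline{K}$-semilinear while $\zeta_G^{\circ n}$ is $\overline{K}$-linear), but both send $e_\chi\mapsto e_{\chi^{(p^n)}}$ after extension to $\overline{K}$, and since both sides of your identity $\phi^{(p^{tn})}|_Z=\zeta_G^{\circ tn}\circ\phi|_Z\circ(\zeta_G^{\circ tn})^{-1}$ are $\overline{K}$-linear, agreement on the idempotent basis is all that is needed, so the inductive step goes through.
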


\begin{proof}
By the Brauer-Feit theorem~\cite{bf59} $|\Irr(B)|\leq p^{2d}$. Therefore, the isomorphism $\overline{K}B\to\overline{K}B^{(p^{tn})}$ induced by $\phi^{\circ t}$ satisfies $\phi^{\circ t}(e_\chi)=e_{\chi^{(p^{tn})}}$ for all $\chi\in\Irr(B)$ (as permutation of the characters must have order dividing $t$.) Therefore by Proposition~\ref{strong_isomorphism} $\phi|_{Z(B)}=\zeta_G^{\circ(tn)}|_{Z(B)}$. Therefore, $\phi$ satisfies condition (1) in Definition \ref{strong_frobenius_definition} and condition (2) is automatic by Remark~\ref{remark_strong_frobenius} so the result is proved.
\end{proof}

The strong Frobenius and strong $\cO$-Frobenius numbers are compatible with Morita equivalences of blocks defined over $\cO$.

\begin{prop}
\label{morita_O_proposition}
Let $G$ and $H$ be finite groups and $B$ and $C$ blocks of $\cO G$ and $\cO H$. Suppose we have a Morita equivalence between $B$ and $C$, then $\SF(kB)=\SF(kC)$ and $\SOF(B)=\SOF(C)$.
\end{prop}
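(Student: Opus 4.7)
By symmetry it suffices to show $\SOF(C) \leq \SOF(B)$ and $\SF(kC) \leq \SF(kB)$. For the $\cO$-part, set $n = \SOF(B)$ and let $\phi : B \to B^{(p^n)}$ be a strong $\cO$-Frobenius isomorphism of degree $n$. Let $M$ be the $B$-$C$-bimodule implementing the given Morita equivalence; twisting turns $M^{(p^n)}$ into a $B^{(p^n)}$-$C^{(p^n)}$-bimodule that also induces a Morita equivalence. Composing the three Morita equivalences coming from $M$, $\phi$, and $M^{(p^n)}$ produces a $C$-$C^{(p^n)}$-bimodule $N$ implementing a Morita equivalence $C \sim_{\operatorname{Mor}} C^{(p^n)}$.

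Write $\sigma : \Irr(B) \to \Irr(C)$ for the character bijection attached to $M$ via Proposition \ref{isom_centres}(iii); the bijection for $M^{(p^n)}$ is then its $(p^n)$-twist $\psi^{(p^n)} \mapsto \sigma(\psi)^{(p^n)}$. Combining this with the character description of $\phi$ (Proposition \ref{strong_isomorphism} together with Remark \ref{remark_strong_frobenius}) shows that the centre isomorphism $Z(\overline K C)\to Z(\overline K C^{(p^n)})$ induced by $N$ acts by
\begin{align*}
e_\chi \;\longmapsto\; e_{\sigma^{-1}(\chi)} \;\longmapsto\; e_{\sigma^{-1}(\chi)^{(p^n)}} \;\longmapsto\; e_{\chi^{(p^n)}},
\end{align*}
which coincides with $\zeta_H^{\circ n}|_{Z(C)}$ by Proposition \ref{strong_isomorphism}. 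The parallel trace at the level of simple modules shows that each simple $kC$-module is sent to its $(p^n)$-twist, so the dimension hypothesis of Proposition \ref{morita_implies_iso} is satisfied and yields an $\cO$-algebra isomorphism $\psi : C \to C^{(p^n)}$. By its construction $\psi|_{Z(C)}$ coincides with the centre isomorphism induced by $N$, hence with $\zeta_H^{\circ n}|_{Z(C)}$, and so $\psi$ is a strong $\cO$-Frobenius isomorphism of degree $n$.

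For the $k$-case take $n = \SF(kB)$ and a strong Frobenius isomorphism $\phi : kB \to kB^{(p^n)}$, and run the analogous construction over $k$ using the $k$-Morita equivalences reduced from $M$ and $M^{(p^n)}$. Condition (2) of Definition \ref{strong_frobenius_definition} for the resulting $\psi : kC \to kC^{(p^n)}$ follows from the simple-module trace, invoking condition (2) for $\phi$ at the middle step. For condition (1), although $\phi$ is not assumed to lift to an $\cO$-algebra isomorphism, its restriction $\phi|_{Z(kB)} = \zeta_G^{\circ n}|_{Z(kB)}$ is the reduction modulo $\mathfrak p$ of $\zeta_G^{\circ n}|_{Z(B)}$. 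The composed $k$-isomorphism of centres is therefore the reduction of the $\cO$-composition built from $M$, $\zeta_G^{\circ n}|_{Z(B)}$ and $M^{(p^n)}$, and the $\overline K$-calculation of the previous paragraph identifies this $\cO$-composition with $\zeta_H^{\circ n}|_{Z(C)}$.

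The only step that is not formal book-keeping is the identification of the character bijection for $M^{(p^n)}$ as the $(p^n)$-twist of the one for $M$; granted that, the argument reduces to composing three Morita equivalences and applying Proposition \ref{morita_implies_iso} once.
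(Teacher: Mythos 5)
Your proof is correct and follows essentially the same strategy as the paper's: compose the Morita equivalences from $M$, $\phi$, and $M^{(p^n)}$, verify via $\overline{K}$-character idempotents and Proposition~\ref{strong_isomorphism} that the induced centre map is $\zeta_H^{\circ n}$, verify the simple-module twist, and upgrade to an isomorphism using Proposition~\ref{morita_implies_iso}. The only cosmetic difference is that you carry out the $\SOF$ case in full first (which the paper dismisses as ``similar but more direct'') and then deduce the $\SF$ case, whereas the paper writes out only the $\SF$ case.
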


\begin{proof}
We only prove the statement for strong Frobenius numbers. The statement for strong $\cO$-Frobenius numbers is similar but more direct.
\newline
\newline
Let $M$ be a $B$-$C$-bimodule inducing a Morita equivalent between $B$ and $C$ and let $\phi:kB\to kB^{(p^n)}$ be a strong Frobenius isomorphism. Now consider the following commutative diagram of Morita equivalences
\begin{align*}
\begin{CD}
\mod(kB)
@<kM\otimes_{kC}\_\_<<
\mod(kC)\\
@VV \Phi V @VV F V\\
\mod(kB^{(p^n)})
@>\Hom_k(M^{(p^n)},k)\otimes_{B^{(p^n)}}\_\_>>
\mod(kC^{(p^n)}),
\end{CD}
\end{align*}
where $\Phi$ is the Morita equivalence induced by $\phi$ and $F$ is the composition of the other three Morita equivalences. Now let $S$ be a simple $kC$-module and write $T:=kM\otimes_{kC}S$, a simple $kB$-module. Then by construction $(kM^{(p^n)})^*\otimes_{kB^{(p^n)}}T^{(p^n)}\cong S^{(p^n)}$, so we have $F(S)\cong S^{(p^n)}$ for all simple $kC$-modules $S$.
\newline
\newline
Recall from Proposition~\ref{isom_centres}(i) that all Morita equivalences induce isomorphisms of centres. Consider the isomorphisms
\begin{align*}
\begin{CD}
Z(B)
@<M\otimes_C\_\_<<
Z(C)\\
@VV \zeta_G V @VV f V\\
Z(B^{(p^n)})
@>\Hom_{\cO }(M^{(p^n)},\cO )\otimes_{B^{(p^n)}}\_\_>>
Z(C^{(p^n)}),
\end{CD}
\end{align*}
where the horizontal arrows are induced by the corresponding Morita equivalences and $f$ is obtained by composition of the other three isomorphisms. Note that tensoring these maps with $k$ gives the isomorphisms of centres from the previous commutative diagram. Now tensor these maps with $\overline{K}$ and study the image of $e_\chi$ for each $\chi\in\Irr(C)$. Using Proposition~\ref{isom_centres}(iii) for the horizontal arrows and Proposition~\ref{strong_isomorphism} for the vertical arrows we get:
\begin{center}
\begin{tikzcd}
  e_\psi \rar[maps from] \dar[maps to] & e_\chi \dar[maps to]{f} \\
e_{\psi^{(p^n)}} \rar[maps to] &e_{\chi^{(p^n)}},
\end{tikzcd}
\end{center}
for all $\chi \in \Irr(C)$, where $e_\psi$ is the image of $e_\chi$ under the isomorphism given by $\overline{K}\otimes_{\cO}M$.
So the induced isomorphism $Z(\overline{K}C)\to Z(\overline{K}C^{(p^n)})$ is given by $\zeta_H$ restricted to $Z(\overline{K}C)$ and so the induced map $Z(kC)\to Z(kC^{(p^n)})$ is also given by $\zeta_H$ restricted to $Z(kC)$, as desired. Hence we have a Morita equivalence $kC\to kC^{(p^n)}$ that preserves the dimensions of simples and so by Proposition~\ref{morita_implies_iso} we have an isomorphism. Furthermore this isomorphism has the two properties of Definition~\ref{strong_frobenius_definition}. We have shown that $\SF(kB)\geq \SF(kC)$ and by an identical argument we also have $\SF(kB)\leq \SF(kC)$ and the proposition is proved.
\end{proof}

Since the strong Frobenius number of a block is at least the Morita-Frobenius number,~\cite{ke04} implies that in order to prove Donovan's conjecture it suffices to bound the Cartan invariants and the strong Frobenius number in terms of the defect group, and this will be our strategy in Section \ref{rank4}. We summarize this here.

\begin{prop}
Let $P$ be a finite $p$-group. If there are natural numbers $c(P)$ and $m(P)$ such that if $G$ is a finite group and $B$ is a block of $\cO G$ with defect groups isomorphic to $P$, then the Cartan invariants of $B$ are at most $c(P)$ and $\SF(kB) \leq m(P)$ (or $\SOF(B) \leq m(P)$), then Donovan's conjecture holds for $P$ with respect to $k$. 

If Donovan's conjecture holds for $P$ with respect to $\cO$, then there are natural numbers $c(P)$ and $m(P)$ such that if $G$ is a finite group and $B$ is a block of $\cO G$ with defect groups isomorphic to $P$, then the Cartan invariants of $B$ are at most $c(P)$ and $\SF(kB) \leq \SOF(B) \leq m(P)$.
\end{prop}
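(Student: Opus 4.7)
The proposition is essentially a repackaging of the main theorem of~\cite{ke04}, which asserts that Donovan's conjecture for $P$ over $k$ is equivalent to the existence of uniform bounds (depending only on $P$) on both the Cartan invariants and the Morita Frobenius number $\mf(kB)$ for blocks $B$ with defect group isomorphic to $P$. My plan is to combine this with the chain of inequalities $\mf(kB)\leq\SF(kB)\leq\SOF(B)$ and with the Morita invariance of $\SOF$ already proved in Proposition~\ref{morita_O_proposition}.

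For the first direction, I would note that any strong Frobenius isomorphism $kB\to(kB)^{(p^n)}$ is by definition in particular a $k$-algebra isomorphism and therefore induces a Morita equivalence between $kB$ and $(kB)^{(p^n)}$, which gives $\mf(kB)\leq\SF(kB)$. By Remark~\ref{remark_strong_frobenius} we also have $\SF(kB)\leq\SOF(B)$, so either form of the hypothesis produces a uniform bound on $\mf(kB)$. Together with the assumed bound $c(P)$ on Cartan invariants, the equivalence of~\cite{ke04} then yields Donovan's conjecture for $P$ with respect to $k$.

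For the converse, I would assume Donovan's conjecture for $P$ with respect to $\cO$, so that only finitely many Morita equivalence classes of blocks of $\cO G$ with defect group isomorphic to $P$ can occur. The Cartan matrix of $kB$ is a Morita invariant (up to simultaneous permutation of rows and columns), so its entries take only finitely many values over all such $B$ and are thus uniformly bounded by some $c(P)$. By Proposition~\ref{morita_O_proposition}, $\SOF(B)$ is constant on Morita equivalence classes of blocks over $\cO$ and is therefore likewise bounded by some $m(P)$; the inequality $\SF(kB)\leq\SOF(B)\leq m(P)$ then follows from Remark~\ref{remark_strong_frobenius}.

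No genuine obstacle arises here: the statement is a bookkeeping result that assembles the equivalence of~\cite{ke04}, the trivial inequalities $\mf(kB)\leq\SF(kB)\leq\SOF(B)$, and Proposition~\ref{morita_O_proposition}. The only subtlety worth flagging in the write-up is that Morita invariance of the Cartan matrix is only up to relabelling of the simples, but this is still enough to bound its entries uniformly.
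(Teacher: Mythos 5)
Your proposal is correct and takes essentially the same route as the paper: both directions rest on the chain of inequalities $\mf(kB)\le\SF(kB)\le\SOF(B)$ together with the equivalence from~\cite{ke04} for the forward implication, and on Proposition~\ref{morita_O_proposition} (Morita invariance of $\SOF$) for the converse. You merely spell out, slightly more explicitly than the paper's one-line proof, why each inequality holds and why Cartan invariants are bounded on finitely many Morita classes.
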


\begin{proof}
The first part is a consequence of~\cite{ke04}, noting that $\mf(kB) \leq \SF(kB) \leq \SOF(B)$, and the second follows from Proposition \ref{morita_O_proposition}.
\end{proof}

The following is well known but we include it for convenience since we use it frequently.

\begin{lem}
\label{index_p_background_lem}
Let $G$ be a finite group, $N$ a normal subgroup of $G$ of index a power of $p$ and $R\in\{\cO,k\}$. Let $B$ be a block of $RG$ with defect group $D$ covering a block $b$ of $RN$. Then $B$ is the unique block of $RG$ covering $b$, $D \cap N$ is a defect group for $b$ and the stabilizer of $b$ in $G$ is $DN$.
\end{lem}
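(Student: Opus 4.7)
The plan is to reduce to the $G$-stable case via the Fong--Reynolds correspondence and then apply the structure theory for blocks in $p$-power extensions.

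First, set $T = T_G(b) = \{g \in G : b^g = b\}$, so $N \leq T \leq G$, and $[T:N]$ divides $[G:N]$ and is therefore a power of $p$. By the Fong--Reynolds theorem, valid over both $\cO$ and $k$, induction $\tilde B \mapsto \tilde B^G$ yields a bijection between blocks of $RT$ covering $b$ and blocks of $RG$ covering $b$; moreover, defect groups are preserved in the sense that a defect group of $\tilde B$, viewed inside $T \leq G$, is a defect group of $\tilde B^G$.

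Second, work inside $T$, where $b$ is stable. Then $e_b \in Z(RT)$, so $e_b = \sum_{\tilde B} e_{\tilde B}$, summed over blocks $\tilde B$ of $RT$ covering $b$. The algebra $e_bRT$ is naturally a crossed product of the block algebra $b = e_bRN$ by the $p$-group $T/N$. Since $T/N$ is a $p$-group and $R$ has residue characteristic $p$, the group algebra $R[T/N]$ is local, and a standard argument on crossed products of local algebras with $p$-groups shows that $e_b$ is primitive in $Z(RT)$. Hence there is a unique block $\tilde B$ of $RT$ covering $b$, with $e_{\tilde B} = e_b$. A further analysis of the same crossed product (cf.\ Nagao--Tsushima, \emph{Representations of Finite Groups}, or Navarro, \emph{Characters and Blocks of Finite Groups}, Ch.\ 9) shows that any defect group $\tilde D$ of $\tilde B$ satisfies $\tilde D N = T$ and $\tilde D \cap N$ is a defect group of $b$.

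Combining these ingredients: $B = \tilde B^G$ is the unique block of $RG$ covering $b$, and after a $G$-conjugation we may take $D = \tilde D$, giving $DN = T = T_G(b)$ and $D \cap N$ a defect group of $b$. The main technical obstacle is the second step, namely producing the uniqueness and the defect-group description in the stable case; this rests on a careful analysis of the crossed product $e_bRT$ with $T/N$ a $p$-group, and although it is classical block-theoretic folklore, the verification is not entirely formal.
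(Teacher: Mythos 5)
Your proof is correct and follows essentially the same route as the paper, which disposes of the lemma by citing Nagao--Tsushima \cite[5.5.6, 5.5.16]{nt88}: a Fong--Reynolds reduction to the $T_G(b)$-stable case, followed by analysing $e_bRT$ as a crossed product of $b$ with the $p$-group $T/N$, is exactly what those two results package. The one imprecision is the suggestion that primitivity of $e_b$ in $Z(RT)$ follows from ``crossed products of local algebras with $p$-groups'' together with $R[T/N]$ being local --- the block algebra $b$ is generally not local, so this is not quite the mechanism; the genuine argument needs more (e.g.\ that $Z(b)$ is local and analysing $Z(e_bRT)$ via the grading, or a Brauer-homomorphism argument), but you explicitly flag the informality and the sources you cite do carry it out.
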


\begin{proof}
This follows from~\cite[5.5.6, 5.5.16]{nt88}. 
\end{proof}

The following was proved over $k$ in~\cite[Theorem 2.1]{el17} but we extend it to $\cO$.

\begin{thm}\label{theorem_unit}
Let $G$ be a finite group and $B$ a block of $\cO G$ with abelian defect group $D$. Let $N\vartriangleleft G$ such that $G=ND$ and $[G:N]=p$. Let $b$ be a block of $\cO N$ covered by $B$. Then $e_B=e_b$ and there exists a $G/N$-graded unit $a\in Z(B)$ such that $B=\bigoplus_{j=0}^{p-1}a^j\cO Ne_b$.
\end{thm}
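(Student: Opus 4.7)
The plan is to first establish $e_B = e_b$ via a standard centrality/uniqueness argument, then deduce the graded-unit statement from the $k$-version \cite[Theorem 2.1]{el17} by a central lifting combined with graded decomposition.

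To show $e_B = e_b$: since $[G:N] = p$, Lemma~\ref{index_p_background_lem} gives that the stabiliser of $b$ in $G$ equals $DN = G$, so $b$ is $G$-stable. Hence $e_b$ commutes with every element of $G$, so lies in $Z(\cO G)$ and is therefore a sum of block idempotents of $\cO G$. Since by Lemma~\ref{index_p_background_lem} $B$ is the unique block of $\cO G$ covering $b$, this forces $e_b = e_B$.

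For the second assertion: since $G = ND$ forces $D \not\subseteq N$, fix any $g \in D \setminus N$, so $\cO G = \bigoplus_{i=0}^{p-1} \cO N g^i$ and, on multiplying by $e_B = e_b$, we obtain the $G/N$-grading
\begin{align*}
B = \bigoplus_{i=0}^{p-1} B_i, \qquad B_i := \cO N g^i e_b, \qquad B_0 = \cO N e_b.
\end{align*}
I would apply \cite[Theorem 2.1]{el17} to $kB$ to obtain a homogeneous central unit $\bar a \in Z(kB)$; since $kB \neq kB_0$ its degree is nonzero, and after replacing $\bar a$ by a suitable $p'$-power we may assume $\bar a \in kB_1$. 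The reduction map $Z(B) \to Z(kB)$ is surjective (both centres have bases coming from the conjugacy class sums meeting the block), so lift $\bar a$ to $\tilde a \in Z(B)$ and write $\tilde a = \sum_{i=0}^{p-1} \tilde a_i$ with $\tilde a_i \in B_i$. For any homogeneous $x \in B_j$, the equation $x \tilde a = \tilde a x$ separates by graded degree to give $x \tilde a_i = \tilde a_i x$ for each $i$, so every $\tilde a_i$ itself lies in $Z(B)$. Set $a := \tilde a_1$. Since $\bar a \in kB_1$, the reductions of $\tilde a_i$ for $i \neq 1$ vanish, so $a$ reduces to $\bar a$, a unit in $kB$; hence $a$ is a unit in $B$ by completeness of $\cO$.

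To finish, decompose $a^{-1} \in Z(B)$ homogeneously; the relation $a \cdot a^{-1} = 1 \in B_0$ combined with $a \in B_1$ forces $a^{-1}$ to be homogeneous of degree $p-1$, so left multiplication by $a$ is a bijection $B_0 \to B_1$. Iterating gives $a^j \cO N e_b = B_j$ for each $j$ and hence the required decomposition $B = \bigoplus_{j=0}^{p-1} a^j \cO N e_b$. The main obstacle is ensuring that the central lift can be chosen homogeneous of the correct degree; this is resolved by the elementary observation that graded components of central elements in a group-graded algebra are themselves central.
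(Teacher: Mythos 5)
Your argument is correct and takes essentially the same route as the paper: apply the $k$-version from \cite[Theorem 2.1]{el17}, lift the homogeneous central unit $\overline{a}$ along the surjection $Z(B)\twoheadrightarrow Z(kB)$, use the $G/N$-grading of $Z(B)$ (your observation that graded components of central elements are central is exactly what justifies the paper's phrase ``we may assume $a$ is $G/N$-graded''), and conclude $a$ is a unit because $Z(B)$ is local. You spell out a few steps the paper leaves implicit (the proof that $e_B=e_b$, the normalization of $\overline a$ to degree $1$ -- which is in fact unnecessary since any nonzero degree already makes $j\mapsto a^j\cO Ne_b$ hit every graded piece -- and the explicit verification that $a^j\cO Ne_b=B_j$), but these are expository, not substantive, differences.
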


\begin{proof}
By~\cite[Theorem 2.1]{el17}, there exists a $G/N$-graded unit $\overline{a}\in Z(kB)$ such that $kB=\bigoplus_{j=0}^{p-1}\overline{a}^jkNe_b$. Now lift $\overline{a}$ to $a\in Z(B)$. As $Z(B)$ is $G/N$-graded, we may assume $a$ is $G/N$-graded. Finally as $Z(B)$ is a local ring and $\overline{a}\notin\Rad(kB)$, then $a$ is a unit and the result is proved.
\end{proof}

The next result is key, and emphasizes the advantage in using the strong Frobenius number.

\begin{thm}\label{normal_index_p_theorem}
Let $G$ be a finite group and $N$ a normal subgroup of $G$ of index $p$. Now let $B$ be a block of $\cO G$ with abelian defect group $D$ and $b$ a block of $\cO N$ covered by $B$. Then $\SF(kB)\leq\SF(kb)$ and $\SOF(B)\leq\SOF(b)$.
\end{thm}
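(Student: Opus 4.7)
The plan is to split according to whether $D\subseteq N$. By Lemma~\ref{index_p_background_lem} the stabiliser of $b$ in $G$ equals $DN$, so $b$ is $G$-stable precisely when $D\not\subseteq N$. In the first (easy) case $D\subseteq N$, the block $b$ is not $G$-stable and standard Clifford/Fong--Reynolds theory produces an $\cO$-algebra Morita equivalence between $B$ and $b$, whose reduction modulo $p$ is a Morita equivalence between $kB$ and $kb$; Proposition~\ref{morita_O_proposition} then yields the stronger equalities $\SOF(B)=\SOF(b)$ and $\SF(kB)=\SF(kb)$. The remainder of the argument concentrates on the case $D\not\subseteq N$, where $G=ND$ and Theorem~\ref{theorem_unit} supplies a $G/N$-graded unit $a\in Z(B)$ with $B=\bigoplus_{j=0}^{p-1}a^j\cO Ne_b$.

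In this main case, given a strong $\cO$-Frobenius isomorphism $\psi:b\to b^{(p^n)}$ of degree $n=\SOF(b)$, I would define $\phi:B\to B^{(p^n)}$ by $\phi(a^jx)=(a')^j\psi(x)$ for $x\in\cO Ne_b$ and $0\leq j<p$, where $a':=\zeta_G^{\circ n}(a)$. Note that $\zeta_G$ preserves the $G/N$-grading, since any $p'$-element of $G$ lies in $N$ (as $G/N$ has $p$-power order), so $a'$ is again a central graded unit of degree $1$. The single consistency check is that $(a')^p=\psi(a^p)$: because $a^p\in\cO Ne_b\cap Z(B)=Z(b)$, because $\zeta_G$ coincides with $\zeta_N$ on $Z(\cO N)$, and because $\psi|_{Z(b)}=\zeta_N^{\circ n}|_{Z(b)}$ by condition (1) for $\psi$, Proposition~\ref{strong_isomorphism} gives $(a')^p=\zeta_G^{\circ n}(a^p)=\zeta_N^{\circ n}(a^p)=\psi(a^p)$. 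Combined with the centrality of $a$, this makes $\phi$ a well-defined $\cO$-algebra isomorphism.

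For condition (1) of Definition~\ref{strong_frobenius_definition}, centrality of $a$ gives $Z(B)=\bigoplus_j a^jZ(b)$, and then for $z\in Z(b)$ the computation $\phi(a^jz)=\zeta_G^{\circ n}(a)^j\zeta_G^{\circ n}(z)=\zeta_G^{\circ n}(a^jz)$ is immediate. Condition (2) is automatic over $\cO$ by Remark~\ref{remark_strong_frobenius}, so $\SOF(B)\leq\SOF(b)$.

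For the inequality $\SF(kB)\leq\SF(kb)$ I would run the same construction with $\bar a\in Z(kB)$ and a strong Frobenius isomorphism $\bar\psi:kb\to kb^{(p^n)}$ of degree $n=\SF(kb)$, producing $\bar\phi:kB\to kB^{(p^n)}$ satisfying condition (1) verbatim. The main obstacle is verifying condition (2), which is no longer automatic over $k$. My approach is to parametrise the simple $kB$-modules via the observation that $\bar a^p\in Z(kb)$ acts as a scalar $\lambda_T\in k^\times$ on each simple $kb$-module $T$, so since $k$ has characteristic $p$ there is a unique $\mu_T\in k^\times$ with $\mu_T^p=\lambda_T$, and $T$ extends to a unique simple $kB$-module $\tilde T$ on which $\bar a$ acts as $\mu_T$ (every simple of $kB$ arising in this way). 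Writing $\bar a'=u\bar a$ with $u\in Z(kb)^\times$, the hypothesis that $\bar\psi$ takes each simple $T$ to $T^{(p^n)}$ together with the compatibility $\mu_T\mapsto\mu_T^{p^n}$ forced by $\lambda_T\mapsto\lambda_T^{p^n}$ under the Frobenius twist should then yield that $\bar\phi$ sends $\tilde T$ to $\tilde T^{(p^n)}$, completing the proof.
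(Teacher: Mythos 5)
Your proof takes essentially the same route as the paper: the Fong--Reynolds reduction when $b$ is not $G$-stable, then the $G/N$-graded-unit decomposition (from~\cite[Theorem~2.1]{el17} over $k$, Theorem~\ref{theorem_unit} over $\cO$) and the extension of $\psi$ by $a\mapsto\zeta_G^{\circ n}(a)$. The only substantive divergence is in verifying condition (2) over $k$: the paper exploits that the graded unit $a$ supplied by~\cite[Theorem~2.1]{el17} has multiplicative order a power of $p$, so by Schur's lemma in characteristic $p$ it acts as the identity on every simple $kB$-module, whence $S\downarrow_N$ is simple and condition~(2) for $\tilde\phi$ drops out of condition~(2) for $\psi$ with no further work. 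Your variant via unique $p$-th roots $\mu_T^p=\lambda_T$ does not use the $p$-power order of $a$ and is therefore marginally more general, but the closing step (``should then yield'') needs to be made explicit: the point is exactly that restriction to $kb^{(p^n)}$ is a bijection on simples, so the $kB^{(p^n)}$-module associated to $\tilde T$ through $\bar\phi$ and $\tilde T^{(p^n)}$, both being extensions of $T^{(p^n)}$, must coincide by uniqueness of extension --- the bookkeeping with $u\in Z(kb)^\times$ and the scalars $\mu_T$ is not actually needed once you have this.
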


\begin{proof}
As for Proposition~\ref{morita_O_proposition} we only prove the statement for strong Frobenius numbers. The statement for strong $\cO$-Frobenius numbers is similar but easier once we replace~\cite[Theorem 2.1]{el17} with Theorem~\ref{theorem_unit}.
\newline
\newline
If $b$ is not $G$-stable, then $b$ is Morita equivalent to $B$ by the Fong-Reynolds theorem and therefore by Proposition~\ref{morita_O_proposition} $\SF(kB)=\SF(kb)$. So for the rest of the proof we assume that $b$ is $G$-stable. By Lemma \ref{index_p_background_lem} $B$ is the unique block of $G$ covering $b$ and $G=ND$.
\newline
\newline
By~\cite[2.1]{el17} there exists a $G/N$-graded unit $a\in Z(kB)$ of multiplicative order a power of $p$ such that $kB=\bigoplus_{j=0}^{p-1}a^jkb$. Now suppose $\phi:kb\to kb^{(p^n)}$ is a strong Frobenius isomorphism. Then we can extend $\phi$ to an isomorphism $\tilde{\phi}:kB\to kB^{(p^n)}$ by sending $a$ to $\zeta_G^{\circ n}(a)$. By definition $\tilde{\phi}$ satisfies criterion (1) of Definition~\ref{strong_frobenius_definition}. Now suppose $S$ is a simple $kB$-module. Then by Schur's lemma $a$ must act by a scalar and since $a$ has order a power of $p$ it must act by the identity. Therefore $S\downarrow_N$ is also simple. So as $S\downarrow_N$ is associated to $S^{(p^n)}\downarrow_N$ through $\phi$, we must have $S$ associated to $S^{(p^n)}$ through $\tilde{\phi}$. We have shown that $\tilde{\phi}$ is a strong Frobenius isomorphism.
\end{proof}

We now prove that the strong $\cO$-Frobenius number behaves well when we quotient by central $p$-subgroups. If $G$ is a finite group and $Z\leq G$ a central $p$-subgroup then we define $\theta_Z$ to be the natural $\cO$-algebra homomorphism $\cO G\to\cO(G/Z)$. It is well known that $\theta_Z$ induces a bijection of blocks $B\mapsto B_Z$ of $\cO G$ and $\cO(G/Z)$.

\begin{prop}\label{central_p_subgroup}
Let $G$ be a finite group, $Z\leq G$ a central $p$-subgroup and $B$ a block of $\cO G$. Then $\SOF(B_Z)\leq\SOF(B)$.
\end{prop}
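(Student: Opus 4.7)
The plan is to lift a strong $\cO$-Frobenius isomorphism $\phi:B\to B^{(p^n)}$ of degree $n:=\SOF(B)$ to one $\bar\phi:B_Z\to B_Z^{(p^n)}$ of the same degree, via the quotient map $\theta_Z$, so that $\SOF(B_Z)\le n$. Write $I=\ker(\theta_Z|_B)$; since $\ker(\theta_Z:\cO G\to\cO(G/Z))$ is the ideal of $\cO G$ generated by $\{z-1:z\in Z\}$ and $e_B$ is central, $I$ is the two-sided ideal of $B$ generated by the central elements $(z-1)e_B$, $z\in Z$ (and analogously $\ker(\theta_Z|_{B^{(p^n)}})$ is generated in $B^{(p^n)}$ by the $(z-1)e_{B^{(p^n)}}$).

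The crux is showing $\phi(I)=\ker(\theta_Z|_{B^{(p^n)}})$. For $z\in Z$ we have $z=z_p$ and $z_{p'}=1$, so $\zeta_G(z)=z$, and iterating the corollary to Proposition~\ref{strong_isomorphism} gives $\zeta_G^{\circ n}(e_B)=e_{B^{(p^n)}}$. Using that $\zeta_G^{\circ n}$ is an algebra map on $Z(\cO G)$ and $\phi|_{Z(B)}=\zeta_G^{\circ n}|_{Z(B)}$, one obtains
\begin{align*}
\phi\bigl((z-1)e_B\bigr)=\zeta_G^{\circ n}(ze_B)-\zeta_G^{\circ n}(e_B)=(z-1)e_{B^{(p^n)}}.
\end{align*}
Hence $\phi$ sends a generating set of $I$ to a generating set of $\ker(\theta_Z|_{B^{(p^n)}})$, and being an isomorphism it maps the former onto the latter. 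The universal property of the quotient then produces an $\cO$-algebra isomorphism $\bar\phi:B_Z\to B_Z^{(p^n)}$ with $\bar\phi\circ\theta_Z=\theta_Z\circ\phi$.

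To finish, I would verify that $\bar\phi$ satisfies condition (1) of Definition~\ref{strong_frobenius_definition}. By Remark~\ref{remark_strong_frobenius} this reduces, after extending scalars to $\overline K$, to showing $\bar\phi(e_{\bar\chi})=e_{\bar\chi^{(p^n)}}$ for every $\bar\chi\in\Irr(B_Z)$. Characters of $B_Z$ correspond to those $\chi\in\Irr(B)$ with $Z\le\ker\chi$, and $\theta_Z(e_\chi)=e_{\bar\chi}$ for such $\chi$. Since $Z$ is a $p$-group, $\chi(z)$ is a sum of $p$-power roots of unity, hence fixed by $-^{(p^{-n})}$, so $\chi^{(p^n)}(z)=\chi(z)=\chi(1)$ for $z\in Z$; therefore $Z\le\ker\chi^{(p^n)}$ and $\theta_Z(e_{\chi^{(p^n)}})=e_{\bar\chi^{(p^n)}}$. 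Combining $\phi(e_\chi)=e_{\chi^{(p^n)}}$ (Remark~\ref{remark_strong_frobenius} applied to $\phi$) with $\bar\phi\circ\theta_Z=\theta_Z\circ\phi$ yields $\bar\phi(e_{\bar\chi})=e_{\bar\chi^{(p^n)}}$, as required. The only step that needs real care is the descent, but it is essentially forced by $\phi|_{Z(B)}=\zeta_G^{\circ n}|_{Z(B)}$ together with the fact that $Z$ consists of $p$-elements.
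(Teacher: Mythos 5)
Your proof is correct and follows essentially the same strategy as the paper: descend a strong $\cO$-Frobenius isomorphism $\phi:B\to B^{(p^n)}$ along $\theta_Z$ to one for $B_Z$, using that $Z$ consists of $p$-elements so $\zeta_G$ fixes $Z$ pointwise. The only (cosmetic) difference is in verifying condition (1) for $\bar\phi$: the paper checks it directly on conjugacy-class sums using the identity $|C_G(g)|\theta_Z(C^G_g)=|C_{G/Z}(gZ)||Z|\,C^{G/Z}_{gZ}$ and that $\zeta$ commutes with $\theta_Z$, whereas you extend scalars to $\overline K$ and check it on character idempotents via Remark~\ref{remark_strong_frobenius}; both verify the same centre condition on a basis. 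One small point you pass over in silence but the paper makes explicit: for the target of $\bar\phi$ to be $(B_Z)^{(p^n)}$ rather than merely $(B^{(p^n)})_Z$ one should note that these coincide, which follows since $z^{(p^n)}=z$ for $z\in Z$; your idempotent computation implicitly establishes this, but it is worth stating.
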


\begin{proof}
Let $\phi:B\to B^{(p^{\SOF(B)})}$ be a strong $\cO$-Frobenius isomorphism. We first note that $z^{(p^{\SOF(B)})}=z$ for all $z\in Z$ and so $(B_Z)^{(p^{\SOF(B)})}=(B^{(p^{\SOF(B)})})_Z$. Similarly $\phi(ze_B)=ze_{B^{(p^{\SOF(B)})}}$ for all $z\in Z$ and so $\phi$ induces a homomorphism $\phi_Z:B_Z\to B_Z^{(p^{\SOF(B)})}$. Adopting the notation from Lemma~\ref{lem:transfer}, suppose $g\in G$. Then $|C_G(g)|\theta_Z(C^G_g)=|C_{G/Z}(gZ)||Z|C^{G/Z}_{gZ}$. Therefore
\begin{align*}
\phi_Z(C^{G/Z}_{gZ}e_{B_Z})&=\phi_Z\circ\theta_Z\left(\frac{|C_G(g)|}{|C_{G/Z}(gZ)||Z|}C^G_ge_B\right)=\frac{|C_G(g)|}{|C_{G/Z}(gZ)||Z|}\theta_Z\circ\phi(C^G_ge_B)\\
&=\frac{|C_G(g)|}{|C_{G/Z}(gZ)||Z|}\theta_Z(C^G_{\zeta_G^{\circ\SOF(B)}(g)}e_{B^{(p^{\SOF(B)})}})=C^{G/Z}_{\zeta_{G/Z}^{\circ\SOF(B)}(gZ)}e_{B_Z^{(p^{\SOF(B)})}}\\
&=\zeta_{G/Z}^{\circ\SOF(B)}(C^{G/Z}_{gZ}e_{B_Z}),
\end{align*}
where the third equality follows from the fact that $\phi$ is a strong $\cO$-Frobenius isomorphism and the fourth from the fact that $C_G(g)=C_G(\zeta_G^{\circ\SOF(B)}(g))$ and that $C_{G/Z}(gZ)=C_{G/Z}(\zeta_G^{\circ\SOF(B)}(g)Z)$. Therefore, $\phi_Z$ satifies condition (1) of being a strong $\cO$-Frobenius isomorphism and condition (2) is automatic by Remark~\ref{remark_strong_frobenius} so the result is proved.
\end{proof}

We note that we were not able to prove this proposition with strong $\cO$-Frobenius number replaced by strong Frobenius number.


\section{Normal subgroups containing the defect group}

This section is devoted entirely to the proof of Corollary~\ref{powers_to_centre:cor} that is used when we prove Donovan's conjecture for abelian $2$-groups of order at most $64$ in Theorem~\ref{rank_at_most_four}. We begin with a Lemma proved in~\cite[Lemma 2.1]{ke04} but as we repeatedly use it in the rest of this section we state it here.

\begin{lem}\label{fixed_points_lemma}
Let $A$ be a finite dimensional $k$-algebra, $n$ a positive integer, $\phi:A\to A^{(p^n)}$ a $k$-algebra isomorphism and $A^\phi$ the subset of fixed points of $A$ under $A\to A$, $a\mapsto\phi(a)^{(p^{-n})}$. Then $A^\phi$ is an $\mathbb{F}_{p^n}$-subspace of $A$ such that $A=k\otimes_{\mathbb{F}_{p^n}}A^\phi$.
\end{lem}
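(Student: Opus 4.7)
The key observation is to unpack what kind of object the map $\sigma : A \to A$, $a \mapsto \phi(a)^{(p^{-n})}$ actually is. Since $\phi : A \to A^{(p^n)}$ is $k$-linear, and the $k$-action on $A^{(p^n)}$ is the one on the underlying ring of $A$ twisted by $\lambda \mapsto \lambda^{1/p^n}$, the map $\sigma$ is a ring automorphism of the underlying ring of $A$ satisfying $\sigma(\lambda a) = \lambda^{1/p^n}\sigma(a)$ for all $\lambda \in k$ and $a \in A$. In other words, $\tau := \sigma^{-1}$ is a ring automorphism that is Frobenius-semi-linear in the sense that $\tau(\lambda a) = \lambda^{p^n} \tau(a)$. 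So the lemma is a standard Galois descent statement: the fixed points of a Frobenius-semi-linear bijective ring endomorphism on a finite-dimensional $k$-algebra form an $\mathbb{F}_{p^n}$-form of $A$.

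First I would verify that $A^\phi$ is closed under $\mathbb{F}_{p^n}$-scalar multiplication. For $\lambda \in \mathbb{F}_{p^n}$ one has $\lambda^{1/p^n} = \lambda$, so if $\sigma(a)=a$ then $\sigma(\lambda a) = \lambda^{1/p^n}\sigma(a) = \lambda a$. Additivity of $A^\phi$ is immediate from additivity of $\sigma$. Hence $A^\phi$ is an $\mathbb{F}_{p^n}$-subspace of $A$, and the multiplication map $\mu : k \otimes_{\mathbb{F}_{p^n}} A^\phi \to A$ is a well-defined $k$-linear map.

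Next I would prove $\mu$ is injective by a minimal-counterexample argument. Suppose $\sum_{i=1}^r \lambda_i a_i = 0$ is a nontrivial relation with $a_i \in A^\phi$ and $\lambda_i \in k$, chosen with $r$ minimal and (rescaling) $\lambda_1 = 1$. Applying $\sigma$ gives $\sum_{i=1}^r \lambda_i^{1/p^n} a_i = 0$; subtracting kills the $i=1$ term and yields $\sum_{i=2}^r (\lambda_i - \lambda_i^{1/p^n}) a_i = 0$. Minimality forces each $\lambda_i^{1/p^n} = \lambda_i$, i.e.\ $\lambda_i \in \mathbb{F}_{p^n}$, so the $a_i$ are already $\mathbb{F}_{p^n}$-linearly dependent, a contradiction. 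This shows injectivity and, in particular, $\dim_{\mathbb{F}_{p^n}} A^\phi \le \dim_k A$.

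For surjectivity, the main obstacle is to produce enough $\sigma$-fixed elements to span $A$ over $k$. I would handle this by a Lang-theorem argument. Pick any $k$-basis $e_1,\dots,e_m$ of $A$ and write $\tau(e_j) = \sum_k a_{jk} e_k$ for an invertible matrix $A_0 = (a_{jk}) \in GL_m(k)$ (invertible because $\tau$ is bijective). A prospective basis $f_i = \sum c_{ij} e_j$ of fixed vectors must satisfy $C = C^{(p^n)} A_0$, i.e.\ $C^{(p^n)} = C A_0^{-1}$, where $C^{(p^n)}$ denotes entrywise $p^n$-th powers. Since $k$ is algebraically closed of characteristic $p$, the Lang--Steinberg theorem applied to the connected algebraic group $GL_m$ over $\overline{\mathbb F_{p^n}}$ with Frobenius $C \mapsto C^{(p^n)}$ asserts that $C \mapsto C^{-1} C^{(p^n)}$ is surjective, producing an invertible $C$ with the required property. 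The rows of $C$ then give $m$ linearly independent fixed points, so $\dim_{\mathbb{F}_{p^n}} A^\phi \ge m = \dim_k A$, which combined with injectivity forces $\mu$ to be an isomorphism.
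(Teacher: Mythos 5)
The paper gives no proof of this lemma---it is quoted from \cite[Lemma 2.1]{ke04}---so I can only compare your argument to the standard descent proof rather than to anything in the paper. Your framing (the map $a\mapsto\phi(a)^{(p^{-n})}$ is a $p^{-n}$-semi-linear ring automorphism $\sigma$ of $A$, so the lemma is Galois descent for $\sigma$) and your injectivity step (Artin's minimal-dependence argument: apply $\sigma$ to a minimal $k$-linear relation among $\mathbb{F}_{p^n}$-independent fixed vectors and subtract) are both correct and cleanly presented.

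The surjectivity step, however, has a genuine gap as written. You apply Lang--Steinberg ``to $GL_m$ over $\overline{\mathbb{F}_{p^n}}$,'' but the only hypothesis on $k$ is that it is algebraically closed of characteristic $p$; the paper does not assume $k=\overline{\mathbb{F}_p}$, and nothing forces the entries of your matrix $A_0$ to be algebraic over $\mathbb{F}_{p^n}$. As you have invoked it, Lang--Steinberg produces a solution $C\in GL_m(\overline{\mathbb{F}_{p^n}})$ only when $A_0\in GL_m(\overline{\mathbb{F}_{p^n}})$, which may fail. What you need is that the Lang map $L:C\mapsto C^{-1}C^{(p^n)}$ is surjective on $GL_m(k)$. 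This does hold, but needs a further word: $L$ is a morphism of schemes over $\mathbb{F}_{p^n}$ (with $F$ the relative Frobenius $x_{ij}\mapsto x_{ij}^{p^n}$ on coordinate rings); the Lang--Steinberg argument shows $L$ is surjective as a morphism; and surjectivity of morphisms is preserved under the base change $\mathbb{F}_{p^n}\to k$, which gives surjectivity on $k$-points. Alternatively you can avoid Lang altogether with an elementary induction on $\dim_k A$: one nonzero $\sigma$-fixed vector $w$ exists because writing $\tau=\sigma^{-1}$ on the cyclic $\tau$-subspace generated by any $v\neq0$ leads to a separable additive (Frobenius) polynomial equation in one unknown over $k$, which has a nonzero root since $k$ is algebraically closed; then $kw$ is $\tau$-stable, fixed vectors of the induced map on $A/kw$ lift (an Artin--Schreier equation $\mu^{p^n}-\mu=-\lambda$ is solvable in $k$), and one inducts. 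Either repair closes the gap; as it stands, the appeal to Lang--Steinberg over $\overline{\mathbb{F}_{p^n}}$ does not cover the generality of $k$ assumed in the lemma.
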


\begin{thm}\label{normal_p'_index_theorem}
Fix a prime $p$. There exists a function $f:\mathbb{N}\times\mathbb{N}\times\mathbb{N}\to\mathbb{N}$ with the following property. Let $G$ be a finite group and $B$ a block of $kG$ with abelian defect group $D$. Let $N$ be a normal subgroup of $G$ containing $D$ and $b$ a block of $kN$ covered by $B$. Then there exists an isomorphism $\phi:B\to B^{(p^n)}$ for some positive integer $n\leq f(|D|,\SF(b),t)$, where $t$ is the dimension of the basic algebra of $b$. Furthermore we can choose $\phi$ such that $\phi(\alpha e_B)=\zeta_G^{\circ n}(\alpha e_B)$ for all $\alpha\in Z(kG)\cap kN$.
\end{thm}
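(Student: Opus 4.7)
The strategy is to start from a strong Frobenius isomorphism $\phi_b : kb \to (kb)^{(p^{n_0})}$ with $n_0 = \SF(b)$, and to extend a sufficiently high iterate $\phi_b^{\circ s}$ to an isomorphism $\phi : B \to B^{(p^{sn_0})}$. The bound $n \leq f(|D|, \SF(b), t)$ will arise by showing that each obstruction to extending lives in a group whose order is bounded by $|D|$, $\SF(b)$, and $t$.

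First I would reduce to the case that $b$ is $G$-stable. If not, let $T = \Stab_G(b)$; Fong--Reynolds identifies $B$ with an algebra naturally constructed from the unique block $B_T$ of $kT$ covering $b$ in a Frobenius-equivariant manner, so it suffices to prove the statement for $(T, B_T)$. Assuming $b$ is $G$-stable, we have $e_B = e_b$ and $B$ is a $G/N$-graded algebra with degree-zero component $b$; each graded component $B_{gN} = b \cdot (g\, e_b)$ is a rank-$1$ invertible $(b,b)$-bimodule with a distinguished trivialisation coming from the lift $g$, and conjugation by $g$ defines $c_g \in \Aut(b)$.

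To extend $\phi_b^{\circ s}$ to $B$ I would seek, for each coset representative $g$, a unit $u_g \in (b^{(p^{sn_0})})^\times$ and set $\phi(g\, e_b) = u_g \cdot (g\, e_b)$. Bimodule compatibility forces (a) $\phi_b^{\circ s} \circ c_g \circ (\phi_b^{\circ s})^{-1} \equiv c_g \pmod{\Inn(b)}$ for every $g$, and multiplicativity then forces (b) a $2$-cocycle condition on the $u_g$ with values in a central unit group. Since every algebra automorphism of $b$ is determined by its action on a basic subalgebra of dimension $t$, $\Out(b)$ is bounded in terms of $t$; the Frobenius permutes the finite subgroup $\langle \overline{c_g} \rangle_g \leq \Out(b)$, so some iterate $s_1$ bounded by a function of $t$ centralises each $c_g$ in $\Out(b)$, resolving (a). The cocycle obstruction (b) then lies in a cohomology group of bounded order (controlled by $Z(b)^\times$ and the $p$-part inherited from $|D|$), and a further iterate $s_2$ trivialises it, yielding the desired $\phi$ with $n = s_1 s_2 n_0$.

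Finally, the central compatibility $\phi(\alpha\, e_B) = \zeta_G^{\circ n}(\alpha\, e_B)$ for $\alpha \in Z(kG) \cap kN$ is automatic from the construction: $\phi|_b = \phi_b^{\circ s_1 s_2}$ acts on $Z(b)$ as $\zeta_N^{\circ n}$ by the strong Frobenius property, and $\zeta_G$ and $\zeta_N$ agree on group elements of $N$ since both are defined by raising the $p'$-part by the same rule, so the two maps coincide on $(Z(kG) \cap kN)\, e_B \subseteq Z(b)$. The main obstacle is the explicit quantification: making precise the claim that $\Out$ of the basic algebra of $b$ and the relevant second cohomology group are bounded by elementary functions of $t$, $\SF(b)$, and $|D|$, and verifying full Frobenius-equivariance of the Fong--Reynolds reduction including the condition on $Z(kG) \cap kN$.
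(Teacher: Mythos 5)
Your high-level strategy — reduce to $b$ being $G$-stable, view $B$ as a crossed product of $G/N$ with $b$, extend a suitable iterate of a strong Frobenius isomorphism by choosing units $u_g$ in each graded component, and control the two obstructions (conjugacy of the $c_g$'s in $\Out$, and a $2$-cocycle) — is the right shape, and matches the framework the paper imports from K\"ulshammer's~\cite{ku95}. However, there is a genuine and fatal gap in the key finiteness step.

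You claim that ``every algebra automorphism of $b$ is determined by its action on a basic subalgebra of dimension $t$, $\Out(b)$ is bounded in terms of $t$.'' This is false: $\Out$ of a finite-dimensional algebra is in general an \emph{algebraic group}, not a finite group. For instance, if $ebe \cong k[x]/(x^2)$, then $\Aut(ebe) \cong k^{\times}$ and $\Inn(ebe)$ is trivial, so $\Out(ebe) \cong k^{\times}$ is infinite. Consequently your proposed bound $s_1$ ``centralising each $c_g$ in $\Out(b)$'' does not exist for the reason you give. What actually makes the argument work — and this is the heart of K\"ulshammer's~\cite{ku95} and of the paper's proof — is that $\Out(ebe)$ is a (possibly infinite) algebraic group and that, up to conjugation inside a fixed algebraic subgroup $H$, there are only finitely many homomorphisms from the finite group $G/N$ into $H$; the finiteness comes from algebraic group theory, not from any bound on $|\Out(ebe)|$. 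To make the $\mathbb{F}_{p^{\SF(b)}}$-rationality arguments go through, the paper further restricts to the subgroup of $\Out(ebe)$ preserving $eZ(b)^{\psi}e$ (a finite, hence algebraic, subset of $Z(ebe)$). Your cohomological obstruction step has the same issue: the relevant finiteness of $H^2(G/N,k)$ is not ``inherited from $|D|$ and $Z(b)^\times$'' in an elementary way but is a specific computation in~\cite{ku95}. You also omit the reduction via $G[b]$ (the subgroup of elements inducing inner automorphisms of $b$) which is needed to arrange that $B$ and $b$ share a block idempotent and that $p\nmid [G:N]$ with $[G:N]$ bounded in terms of $|D|$; without this, the crossed-product argument does not apply in the form you are using. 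Finally, the paper needs an extra multiplicative factor $m_3 = |D|^2!$ to ensure that the constructed Morita equivalence sends each simple $S$ to $S^{(p^n)}$ so that Proposition~\ref{morita_implies_iso} upgrades it to an isomorphism; this step is absent from your sketch.
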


\begin{proof}
We follow the proof of the Theorem at the beginning of $\S5$ in~\cite{ku95}. We first claim that we may assume that $B$ and $b$ share a block idempotent, that $p\nmid[G:N]$ and that $[G:N]$ can be bounded by a function of $|D|$.
\newline
\newline
Let $I_G(b)$ be the stablizer of $b$ in $G$ and $B'$ the unique block of $I_G(b)$ covering $b$ with Brauer correspondent $B$. Then $B'$ also has defect group $D$ and by~\cite{ku81} Theorem C and it's proof, $B\cong\Mat([G:I_G(b)],k)\otimes_kB'$ and from now on we identify the two sides of this isomorphism. The induced isomorphism
\begin{align*}
Z(B')&\cong Z(B)\\
\alpha&\mapsto1\otimes\alpha
\end{align*}
is given by the transfer map $\Tr_{I_G(b)}^G$ restricted to $Z(B')$. Next we note that if $\phi':B'\to B'^{(p^n)}$ satisfies the properties of the Theorem with respect to $N$ and $b$ then one can construct
\begin{align*}
\phi:B&\to B^{(p^n)}\\
E_{ij}\otimes x&\mapsto E_{ij}\otimes\phi'(x),
\end{align*}
for all $1\leq i,j\leq [G:I_G(b)]$, where $x\in B'$ and $E_{ij}$ is the $[G:I_G(b)]\times[G:I_G(b)]$ matrix with zeroes everywhere except in the $(i,j)^{\nth}$-entry where there's a $1$. Now for $\alpha\in Z(kG)\cap kN$ we have
\begin{align*}
\phi(\alpha e_B)&=\phi(\alpha\Tr_{I_G(b)}^G(e_{B'}))=\phi(\Tr_{I_G(b)}^G(\alpha e_{B'}))=\Tr_{I_G(b)}^G(\phi'(\alpha e_{B'}))=\Tr_{I_G(b)}^G(\zeta_{I_G(b)}^{\circ n}(\alpha e_{B'}))\\
&=\zeta_G^{\circ n}(\Tr_{I_G(b)}^G(\alpha e_{B'}))=\zeta_G^{\circ n}(\alpha\Tr_{I_G(b)}^G(e_{B'}))=\zeta_G^{\circ n}(\alpha e_B),
\end{align*}
where the fifth equality follows from Lemma~\ref{lem:transfer}. Therefore, we may assume $b$ is $G$-stable.
\newline
\newline
Now set $G[b]$ to be the normal subgroup of $G$ consisting of elements that induce inner automorphisms of $b$ and let $b'$ the unique block of $G[b]$ covered by $B$. Let $b_{\cO}$ and $b'_{\cO}$ be the corresponding blocks over $\cO$. Then by~\cite[Proposition 2.2]{kkl12} $b_{\cO}$ and $b'_{\cO}$ are Morita equivalent and so by Proposition~\ref{morita_O_proposition} we have that $\SF(b')=\SF(b)$. Also $D$ is a defect group for $b'$ and the basic algebras of $b$ and $b'$ are isomorphic. Therefore it is enough to prove the Theorem for $N$ replaced by $G[b]$. Now as in~\cite{ku95} we may assume that $B$ and $b$ share a block idempotent, that $p\nmid[G:N]$ and that $[G:N]$ can be bounded by a function of $|D|$.
\newline
\newline
Let $\psi:b\to b^{(p^{\SF(b)})}$ be a strong Frobenius isomorphism and $e\in b$ a basic idempotent of $b$. As $\psi(e)$ and $e^{(p^{\SF(b)})}$ are both basic idempotents of $b^{(p^{\SF(b)})}$ we may assume, by possibly composing $\phi$ with an inner automorphism of $b^{(p^{\SF(b)})}$, that $\psi(e)=e^{(p^{\SF(b)})}$. In other words $e$ is an element of $b^\psi$, in the sense of Lemma \ref{fixed_points_lemma}. As $e$ is a basic idempotent of $b$, Proposition~\ref{isom_centres} gives the isomorphism
\begin{align*}
Z(b)&\to Z(ebe)\\
z&\mapsto eze.
\end{align*}
Therefore we may identify $eZ(b)e$ and $Z(ebe)$. Now consider the isomorphism
\begin{align*}
\psi|_{Z(b)}=\zeta_N^{\circ\SF(b)}|_{Z(b)}:Z(b)\to Z(b^{(p^{\SF(b)})}).
\end{align*}
By a slight abuse of notation, we define the $\mathbb{F}_{p^{\SF(b)}}$-algebra $Z(b)^\psi$ to be $Z(b)^{\psi|_{Z(b)}}$, in the sense of Lemma~\ref{fixed_points_lemma}. Now since $e\in b^\psi$, $eZ(b)^{\psi}e=Z(ebe)^{\psi}$ is an $\mathbb{F}_{p^{\SF(b)}}$-subalgebra of $eb^\psi e=(ebe)^\psi$.
\newline
\newline
Now $eBe=ekGe$ is a crossed product of $G/N$ with $ebe=ekNe$. We claim that conjugation by $ege\in eBe$, for any $g\in G$ leaves $eZ(b)^\psi e$ invariant. We first note that
\begin{align}\label{align:conj}
(ege)(eze)=egze=e(gzg^{-1}g)e=(egzg^{-1}e)(ege),
\end{align}
for all $z\in Z(b)$, where the third equality follows from the fact that $Z(b)$ is invariant under conjugation by $g$. Also if $z\in Z(b)^\psi$ then
\begin{align}\label{align:conj2}
\zeta_N^{\circ\SF(b)}(gzg^{-1})^{(p^{-\SF(b)})}=(g\zeta_N^{\circ\SF(b)}(z)g^{-1})^{(p^{-\SF(b)})}=g\zeta_N^{\circ\SF(b)}(z)^{(p^{-\SF(b)})}g^{-1}=gzg^{-1}.
\end{align}
So by (\ref{align:conj}), in $(eBe)^\times$ we have
\begin{align*}
(ege)(eze)(ege)^{-1}=egzg^{-1}e,
\end{align*}
for all $z\in Z(b)^\psi$. Then by (\ref{align:conj2}) $egzg^{-1}e\in eZ(b)^\psi e$ and the claim is proved.
\newline
\newline
As described in~\cite[$\S2$]{ku95} $\Out(ebe)$, the group of outer $k$-algebra automorphisms of $ebe$ is an algebraic group. Furthermore
\begin{align}\label{align:action}
\Out(ebe)\times Z(ebe)&\to Z(ebe)\\
(\alpha\Inn(ebe),z)&\mapsto\alpha(z)\nonumber
\end{align}
is a well-defined algebraic group action. 
\newline
\newline
Now define $H$ to be the subgroup of $\Out(ebe)$ that leaves $eZ(b)^\psi e$ invariant. Note that $eZ(b)^\psi e$ is a finite and hence algebraic subset of $Z(ebe)$ and so $H$ is an algebraic subgroup of $\Out(ebe)$. We set 
\begin{align*}
e_0:=e, e_1:=\psi(e)\in b^{(p^{\SF(b)})}\text{ and }e_{i+1}:=\psi^{(p^{i\SF(b)})}(e_i)\in b^{(p^{(i+1)\SF(b)})},\text{ for }i\geq1.
\end{align*}
Note that, in fact, $e_i=e^{(p^{i\SF(b)})}$ for all $i\geq0$. We make identifications through the following isomorphisms:
\begin{align*}
\begin{CD}
e_0be_0
@>\psi>>
e_1(b^{(p^{\SF(b)})})e_1
@>\psi^{(p^{\SF(b)})}>>
e_2(b^{(p^{2\SF(b)})})e_2
@>\psi^{(p^{2\SF(b)})}>>
e_3(b^{(p^{3\SF(b)})})e_3
@>\psi^{(p^{3\SF(b)})}>>
\dots.
\end{CD}
\end{align*}
So we have a series of group homomorphisms $\theta_i:G/N\to H$ induced by the crossed products $e_i(B^{(p^{i\SF(b)})})e_i$ of $G/N$ with $e_i(b^{(p^{i\SF(b)})})e_i$. As in~\cite[$\S2$]{ku95}, up to conjugation in $H$ there are finitely many such homomorphisms. We claim that this finite number can be bounded by a function in $[G:N],\SF(b)$ and $t$. As stated above $Z(ebe)^\psi$ is an $\mathbb{F}_{p^{\SF(b)}}$-subalgebra of $(ebe)^\psi$ and $ebe$ has dimension $t$ and so the number of possible isomorphism classes of $ebe$ (and $eZ(b)e$ as a subalgebra) is bounded by a function in $\SF(b)$ and $t$, meaning so is the number of possible isomorphism classes of the algebraic group $H$. Furthermore, we are already assuming $[G:N]$ is bounded by a function in $|D|$ and hence the number of possible isomorphism classes of $G/N$ is also bounded by a function in $|D|$. This proves the claim. Therefore there exists some positive integer $m_0$, bounded by a function in $|D|$, $\SF(b)$ and $t$, and $h'\in H$ such that
\begin{align*}
\theta_{m_0}(gN)=h'\theta_0(gN)h'^{-1},
\end{align*}
for all $g\in G$. Moreover, the set of $\mathbb{F}_{p^{\SF(b)}}$-algebra automorphisms of $eZ(b)^\psi e$ can be bounded by a function in $\SF(b)$ and $t$ and so there exists some positive integer $m_1$, again bounded by a function in $\SF(b)$ and $t$, and $h\in H$ such that
\begin{align}\label{align:centre}
\theta_{m_0m_1}(gN)=h\theta_0(gN)h^{-1}\text{ and }\theta_{m_0m_1}(gN)(z)=\theta_0(gN)(z),
\end{align}
for all $g\in G$ and $z\in eZ(b)^\psi e$, where we are considering the action of $\Out(ebe)$ on $z\in eZ(b)e$ from (\ref{align:action}).
\newline
\newline
Next we see from~\cite[$\S3$]{ku95} that $|H^2(G/N,k)|$ is finite and hence can be bounded by a function in $|D|$. Therefore, by~\cite[$\S4$]{ku95} there exists a positive integer $m_2$, bounded by a function in $|D|$ such that $eBe$ and $e_{m_0m_1m_2}(B^{(p^{m_0m_1m_2\SF(b)})})e_{m_0m_1m_2}$ are weakly equivalent (in the sense of~\cite{ku95}) as crossed products of $G/N$ with $ebe$. In particular there exists a $k$-algebra isomorphism
\begin{align*}
\varphi:eBe\to e_{m_0m_1m_2}(B^{(p^{m_0m_1m_2\SF(b)})})e_{m_0m_1m_2}
\end{align*}
and if we follow through this isomorphism from~\cite{ku95} then (\ref{align:centre}) implies that $\varphi|_{eZ(b)e}$ is the identity (after we've identified $ebe$ and $e_{m_0m_1m_2}(b^{(p^{m_0m_1m_2\SF(b)})})e_{m_0m_1m_2}$ through $\psi^{\circ m_0m_1m_2\SF(b)}$). In other words
\begin{align*}
\varphi(eze)=e_{m_0m_1m_2}\zeta_N^{\circ m_0m_1m_2\SF(b)}(z)e_{m_0m_1m_2}
\end{align*}
for all $z\in Z(b)$ so certainly for all $z\in(Z(kG)\cap kN)e_B$. (Recall that $e_B=e_b$.) In particular $B$ and $B^{(p^{m_0m_1m_2\SF(b)})}$ are Morita equivalent with the isomorphism of centres satisfying $\alpha e_B\mapsto \zeta_G^{\circ n}(\alpha e_B)$ for all $\alpha\in Z(kG)\cap kN$.
\newline
\newline
Finally set $m_3:=|D|^2!$. Then $B$ and $B^{(p^{m_0m_1m_2m_3\SF(b)})}$ are Morita equivalent and, as in the proof of Proposition~\ref{sf_bound_with_iso}, $S$ and $S^{(p^{m_0m_1m_2m_3\SF(b)})}$ correspond through this Morita equivalence. So by Lemma~\ref{morita_implies_iso} $B$ and $B^{(p^{m_0m_1m_2m_3\SF(b)})}$ are isomorphic with the isomorphism of centres satisfying $\alpha e_B\mapsto \zeta_G^{\circ n}(\alpha e_B)$ for all $\alpha\in Z(kG)\cap kN$ and the claim is proved.
\end{proof}

\begin{rem}
We note that we would like to prove Theorem~\ref{normal_p'_index_theorem} with $\phi$ satisfying the strengthened condition that ``$\phi(\alpha e_B)=\zeta_G^{\circ n}(\alpha e_B)$ for all $\alpha\in Z(kG)$''. This would be a crucial step in proving Donovan's conjecture for blocks with abelian defect groups in characteristic $2$ as well as in proving a reduction to quasisimple groups for Donovan's conjecture for blocks with abelian defect groups in arbitrary characteristic.
\end{rem}

\begin{cor}\label{powers_to_centre:cor}
The function $f:\mathbb{N}\times\mathbb{N}\times\mathbb{N}\to\mathbb{N}$ from Theorem~\ref{normal_p'_index_theorem} satisfies the following property. Let $G$ be a finite group and $B$ a block of $kG$ with abelian defect group $D$. Let $L\lhd N\lhd G$, with $[G:N]=p$, $b_N$ the unique block of $kN$ covered by $B$ and $b_L$ a block of $kL$ covered by $b_N$ such that $b_L$ and $b_N$ share a defect group. Suppose there exists $g\in D\backslash N$ such that $g^p\in Z(N)\cap L$, then $\mf(B)\leq f(|D|,\SF(b_L),t)$, where $t$ is the dimension of the basic algebra of $b_L$.
\end{cor}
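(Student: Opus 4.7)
The plan is to apply Theorem~\ref{normal_p'_index_theorem} to the pair $(N, L)$ and then extend the resulting isomorphism $b_N \to b_N^{(p^n)}$ to an isomorphism of $B$, using the crossed-product description of $kB$ coming from the $k$-analogue of Theorem~\ref{theorem_unit}. The existence of any $k$-algebra isomorphism $B \to B^{(p^n)}$ immediately gives $\mf(B) \leq n$, so everything reduces to producing such an isomorphism with the claimed bound on $n$.

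First I would perform the standard reductions: Fong--Reynolds together with the $G[b_L]$-argument (as in the first two paragraphs of the proof of Theorem~\ref{normal_p'_index_theorem}, combining Proposition~\ref{morita_O_proposition} with~\cite{kkl12}) allow us to assume $b_L$ is $N$-stable without changing $\SF(b_L)$ or the dimension of its basic algebra. The hypothesis that $b_L$ and $b_N$ share a defect group, combined with Lemma~\ref{index_p_background_lem} identifying $D \cap N$ as a defect group of $b_N$, lets us assume $D \cap N \leq L$. Then Theorem~\ref{normal_p'_index_theorem} applies to $(N, L, b_N, b_L)$ and yields a $k$-algebra isomorphism $\phi : b_N \to b_N^{(p^n)}$ with $n \leq f(|D\cap N|, \SF(b_L), t) \leq f(|D|, \SF(b_L), t)$ such that $\phi(\alpha e_{b_N}) = \zeta_N^{\circ n}(\alpha e_{b_N})$ for all $\alpha \in Z(kN) \cap kL$.

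Next, the $k$-version of Theorem~\ref{theorem_unit} furnishes a $G/N$-graded central unit $\bar a \in Z(kB)$ with $kB = \bigoplus_{j=0}^{p-1} \bar a^j kb_N$. After possibly raising $g$ to a suitable power and rechoosing, we may take $\bar a$ in the grade $gN$, so $\bar a = g u$ for some $u \in (kb_N)^\times$. Centrality of $\bar a$ forces $u$ to satisfy $uvu^{-1} = g^{-1}vg$ for all $v \in kb_N$ and to commute with $g$; combined with $g^p \in Z(N)$ this gives $\bar a^p = g^p \cdot u^p$ with $u^p \in Z(kb_N)$ (since conjugation by $u^p$ equals conjugation by $g^{-p}$, which is trivial on $kN$). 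I would then define $\tilde\phi : kB \to kB^{(p^n)}$ by $\tilde\phi|_{kb_N} = \phi$ and $\tilde\phi(\bar a) = \zeta_G^{\circ n}(\bar a)$. Because $\bar a$ is central, well-definedness of $\tilde\phi$ as a $k$-algebra homomorphism reduces to the single identity $\phi(\bar a^p) = \zeta_G^{\circ n}(\bar a^p)$.

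This is exactly where the hypothesis $g^p \in Z(N) \cap L$ does its work: $g^p$ itself lies in $Z(kN) \cap kL$, and the claim is that by modifying $\bar a$ by an appropriate $G$-invariant unit of $Z(kb_N)$ one may arrange $\bar a^p = \alpha e_{b_N}$ for some $\alpha \in Z(kN) \cap kL$ (the $u^p$-factor being absorbed into such an $\alpha$). The compatibility from Theorem~\ref{normal_p'_index_theorem} then gives $\phi(\bar a^p) = \zeta_N^{\circ n}(\alpha e_{b_N}) = \zeta_G^{\circ n}(\bar a^p)$, the last equality because $\zeta_N$ agrees with $\zeta_G$ on $kN$. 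Hence $\tilde\phi$ is a well-defined $k$-algebra isomorphism, and $\mf(B) \leq n \leq f(|D|, \SF(b_L), t)$. The main technical obstacle is justifying the modification of $\bar a$ so that $\bar a^p$ admits a representative in $Z(kN) \cap kL$; this is a crossed-product computation in which both parts of the hypothesis are essential, as the centrality of $g^p$ in $N$ controls the conjugation action needed to put $u^p$ into central position, while membership of $g^p$ in $L$ provides the representative there.
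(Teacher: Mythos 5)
Your overall plan coincides with the paper's: apply Theorem~\ref{normal_p'_index_theorem} to the pair $(N,L)$ to obtain $\phi\colon b_N\to b_N^{(p^n)}$ agreeing with $\zeta_N^{\circ n}$ on $(Z(kN)\cap kL)e_{b_N}$, then extend through the $C_p$-crossed product to get an isomorphism $B\to B^{(p^n)}$. You also correctly identify that the crux is a single $p$-th--power identity. The divergence — and the gap — is in how you try to verify that identity. You route the extension through the central unit $\bar a$ of Theorem~\ref{theorem_unit}, as in the proof of Theorem~\ref{normal_index_p_theorem}, which forces you to control $\phi(\bar a^p)$. You then assert that $\bar a$ can be modified by a $G$-invariant unit of $Z(kb_N)$ so that $\bar a^p$ lies in $(Z(kN)\cap kL)e_{b_N}$, but you give no argument, and in fact this step is not available in general: modifying $\bar a$ by $w\in Z(kb_N)^{G/N}$ replaces $\bar a^p$ by $\bar a^p w^p$, and the $p$-th--power map on the unit group of a commutative local $k$-algebra in characteristic $p$ is Frobenius on $1+\Rad$, which is typically far from surjective (e.g.\ in $k[x]/(x^2)$ with $p=2$ the only square in $1+(x)$ is $1$). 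So there is no reason the factor $u^p$ can be absorbed.

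The paper avoids this by not passing through $\bar a$ at all. It uses the decomposition $B=\bigoplus_{i=0}^{p-1} g^i b_N$ directly, and the hypothesis $g^p\in Z(N)\cap L$ is there precisely so that the element whose $p$-th power must be matched is $g$ itself: one needs $\phi(g^p e_{b_N})=\zeta_G^{\circ n}(g^p e_{b_N})$, and since $g^p e_{b_N}\in(Z(kN)\cap kL)e_{b_N}$ this is exactly what Theorem~\ref{normal_p'_index_theorem} provides. The remark that ``we only need to construct an isomorphism $B\to B^{(p^n)}$ and not a strong Frobenius isomorphism'' is the signal that one may use the non-central grading element $g$ rather than $\bar a$: centrality of $\bar a$ was only needed in Theorem~\ref{normal_index_p_theorem} to preserve the two strong-Frobenius conditions, which are not required here. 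In short: same skeleton, but your detour through $\bar a$ creates a genuine obstruction which you acknowledge but do not (and likely cannot) resolve, whereas the paper's choice of $g$ makes the hypothesis do exactly the work it was designed for.
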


\begin{proof}
Theorem~\ref{normal_p'_index_theorem} tells us that we have an isomorphism $\phi:b_N\to b_N^{(p^n)}$ for some positive integer $n\leq f(|D|,\SF(b_L),t)$ such that $\phi(\alpha e_{b_N})=\zeta_G^{\circ n}(\alpha e_{b_N})$ for all $\alpha\in Z(kN)\cap kL$. We now proceed as in the proof of Theorem~\ref{normal_index_p_theorem}, noting that $B=\bigoplus_{i=0}^{p-1}g^ib_N$, that $g^p\in Z(kN)\cap kL$ and that we only need to construct an isomorphism $B\to B^{(p^n)}$ and not a strong Frobenius isomorphism.
\end{proof}


\section{Preliminary results for the application of the classification of finite simple groups}

We gather together some results which will be used in the proof of Theorem \ref{rank_at_most_four}.

\begin{lem}
\label{normal_defect}
Let $G$ be a finite group and $B$ a block of $\cO G$ with normal defect group $D\triangleleft G$. Write $n=|\Aut(D)|_{p'}$. Then $\SOF(B)\leq n^{\frac{1}{2} \log_2(n-1)}$.
\end{lem}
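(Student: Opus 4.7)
The plan is to use the Morita invariance of $\SOF$ (Proposition~\ref{morita_O_proposition}) to reduce to a twisted group algebra with explicit structure, and then bound how the Frobenius twist permutes the defining data.

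First I would reduce to a twisted group algebra. Applying Fong--Reynolds to a block of $\cO DC_G(D)$ covered by $B$, we may assume (after a Morita equivalence preserving $\SOF$) that this block is $G$-stable, so $G/DC_G(D)$ embeds as a $p'$-subgroup $E$ of $\Aut(D)$, whose order divides $n=|\Aut(D)|_{p'}$. Then by K\"ulshammer's theorem on blocks with normal defect groups (or equivalently Puig's source algebra description), $B$ is Morita equivalent as an $\cO$-block to a twisted group algebra $\cO_\alpha(D\rtimes E)$, where $\alpha\in Z^2(E,\cO^\times)$ is a $2$-cocycle with values in $p'$-roots of unity of order dividing $|E|^2$, and hence dividing $n^2$. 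By Proposition~\ref{morita_O_proposition}, it suffices to bound $\SOF\bigl(\cO_\alpha(D\rtimes E)\bigr)$.

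Second I would analyse the Frobenius twist on such an algebra. The underlying ring structure of $\cO_\alpha(D\rtimes E)$ is determined by the triple $(D,E,[\alpha])$ with $E$ acting on $D$. Twisting by $-^{(p^m)}$ replaces the cocycle values (which are roots of unity) by their Galois conjugates, so $\cO_\alpha(D\rtimes E)^{(p^m)}$ is isomorphic as an $\cO$-algebra to $\cO_{\alpha^{(p^{-m})}}(D\rtimes E)$. Consequently an $\cO$-algebra isomorphism $B\to B^{(p^m)}$ exists whenever the pair $(E,[\alpha])$ is fixed by the $m$-fold Frobenius action on the set of such pairs; moreover by Remark~\ref{remark_strong_frobenius}, any such $\cO$-algebra isomorphism becomes a strong $\cO$-Frobenius isomorphism once the induced permutation on $\Irr(B)$ is the $-^{(p^m)}$-permutation, which we can arrange by tracking the character correspondence through the Morita reductions and the twisted group algebra description (using that $\Irr(\cO_\alpha(D\rtimes E))$ is parametrised Clifford-theoretically by $E$-orbits on $\Irr(D)$ together with projective characters of stabilisers).

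Third, I would bound the relevant $m$ combinatorially. The data $(E,[\alpha])$ lives in the set of pairs consisting of a subgroup $E\leq \Aut(D)_{p'}$ together with a class in the finite group $H^2(E,\cO^\times)$. Each subgroup $E$ of $\Aut(D)_{p'}$ is generated by at most $\log_2|E|\leq\log_2 n$ elements of a set of size $n$, and for each such $E$ the Schur multiplier contributes a further bounded factor; a careful counting then yields at most $n^{\frac{1}{2}\log_2(n-1)}$ such pairs. The cyclic group generated by the Frobenius acts on this finite set, and the orbit of $(E,[\alpha])$ has size at most its cardinality, giving the desired bound $\SOF(B)\leq n^{\frac{1}{2}\log_2(n-1)}$.

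The main obstacle will be the passage from an $\cO$-algebra isomorphism $\phi:B\to B^{(p^m)}$ (given by matching the pair $(E,[\alpha])$) to a \emph{strong} $\cO$-Frobenius isomorphism, i.e.\ showing condition~(1) of Definition~\ref{strong_frobenius_definition} holds. By Remark~\ref{remark_strong_frobenius} this amounts to verifying that $\overline{K}\otimes_\cO\phi$ sends each $e_\chi$ to $e_{\chi^{(p^m)}}$, which must be checked by following the character correspondence through Fong--Reynolds, the K\"ulshammer Morita equivalence, and the identification of $\Irr(\cO_\alpha(D\rtimes E))$ via Clifford theory; ensuring compatibility at each step is where the real work lies.
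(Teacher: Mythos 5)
Your overall strategy matches the paper's: apply K\"ulshammer's description of blocks with normal defect group to replace $B$ (up to $\cO$-Morita equivalence, hence preserving $\SOF$) by a block of an explicit small group, then bound how the Frobenius twist moves that data, with Green's estimate on the Schur multiplier supplying the numerical bound $n^{\frac{1}{2}\log_2(n-1)}$. The paper phrases this via the central extension $D \rtimes \tilde{E}$ with $\tilde{E}/Z(\tilde{E}) = E$ and $Z(\tilde{E}) \leq [\tilde{E},\tilde{E}]$, whereas you phrase it via the twisted group algebra $\cO_\alpha(D\rtimes E)$; these are equivalent pictures, so this is a cosmetic difference.

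There are two problems with the proposal. First, the combinatorial bound in your third step is misdirected. You propose to count pairs $(E,[\alpha])$ with $E$ ranging over subgroups of $\Aut(D)_{p'}$, and assert the count is at most $n^{\frac{1}{2}\log_2(n-1)}$; but the number of subgroups of $\Aut(D)_{p'}$ generated by $\log_2 n$ elements can be as large as about $n^{\log_2 n}$, which already exceeds your claimed bound before the cocycle factor enters. The resolution is that $E$ need not vary at all: the inertial quotient is a Morita invariant and is preserved by the Frobenius twist (which does not alter the fusion system), so along the Frobenius orbit only the class $[\alpha] \in H^2(E,\cO^\times)$ can move. The orbit size is therefore bounded by $|H^2(E,\cO^\times)|$, which is at most the Schur multiplier of $E$, and Green's bound then gives $|E|^{\frac{1}{2}\log_2(|E|-1)} \leq n^{\frac{1}{2}\log_2(n-1)}$ directly. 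This is exactly the shape of the paper's argument (via $|Z(\tilde{E})|$).

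Second, and more seriously, you correctly identify that passing from an abstract $\cO$-algebra isomorphism $\phi : B \to B^{(p^m)}$ to a strong $\cO$-Frobenius isomorphism (that is, verifying condition~(1) of Definition~\ref{strong_frobenius_definition}, equivalently $\overline{K}\otimes_\cO\phi(e_\chi)=e_{\chi^{(p^m)}}$ by Remark~\ref{remark_strong_frobenius}) is where the real content lies, but you leave it entirely unresolved. Note that the na\"ive fallback of invoking Proposition~\ref{sf_bound_with_iso} would introduce an extra factor of $p^{2d}!$, spoiling the bound as stated, so the character correspondence through K\"ulshammer's Morita equivalence genuinely must be tracked as you suggest; simply saying this is "where the real work lies" does not discharge it. Since the strong condition on the centre is what distinguishes $\SOF$ from the ordinary $\cO$-Frobenius number and is the whole point of this invariant, the proposal as written establishes a bound for $\OF(B)$ but not for $\SOF(B)$.
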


Note that we make no attempt to obtain an efficient bound.

\begin{proof}
By~\cite{ku85} we may assume that $G = D \rtimes \tilde{E}$ where $E=\tilde{E}/Z(\tilde{E})$ is the inertial quotient of $B$ and $Z(\tilde{E}) \leq [\tilde{E},\tilde{E}]$, so $|Z(\tilde{E})|$ is at most the order of the Schur multiplier of $E$. By~\cite[6.2]{gr56} the Schur multiplier of $E$ has order at most $|E|^{\frac{1}{2} \log_2(|E|-1)}$. The number of $p$-blocks of $G$ is $|E|$ and $|E| \leq |\Aut(D)|$, so we have the required bound.
\end{proof}

A block is \emph{inertial} if it is basic Morita equivalent to its Brauer correspondent in the normalizer of a defect group. A block $B$ of $kG$ is \emph{nilpotent covered} if there is some finite group $E$ with $G \lhd E$ and a nilpotent block of $kE$ covering $B$. Note that nilpotent covered blocks are inertial by~\cite{pu11}.

\begin{lem}
\label{nil_covered}
Let $G$ be a finite group with $N \lhd G$ such that $G/N$ is $p$-nilpotent and let $B$ be a block of $\cO G$ covering a $G$-stable inertial block $b$ of $N$. Then $\SF(B)\leq  n^{\frac{1}{2} \log_2(n-1)}$, where  $n = |\Aut(D \cap N)|_{p'}$. In particular this applies if $b$ is nilpotent covered.
\end{lem}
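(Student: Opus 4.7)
\emph{Plan.} The strategy is to reduce to Lemma~\ref{normal_defect} by exploiting the inertial hypothesis on $b$ via basic Morita equivalence, and then to lift the resulting bound from $b$ up to $B$ using the $p$-nilpotent structure of $G/N$ together with iterated use of Theorem~\ref{normal_index_p_theorem}.

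First I would bound $\SF(kb)$ directly. Writing $Q := D \cap N$ for the defect group of $b$, the inertial hypothesis gives a basic Morita equivalence over $\cO$ between $b$ and its Brauer correspondent $c$ in $\cO N_N(Q)$, a block with normal defect group $Q$. Lemma~\ref{normal_defect} applied to $c$ yields $\SOF(c) \leq n^{\frac{1}{2}\log_2(n-1)}$. By Proposition~\ref{morita_O_proposition} the basic Morita equivalence transports this to $\SOF(b) = \SOF(c)$, and hence $\SF(kb) \leq \SOF(b) \leq n^{\frac{1}{2}\log_2(n-1)}$.

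Next I would pass from $b$ to $B$. Since $G/N$ is $p$-nilpotent, let $M \lhd G$ with $N \leq M$, $M/N = O_{p'}(G/N)$, so $[M:N]$ is coprime to $p$ and $G/M$ is a $p$-group. Let $b_M$ denote the (unique, by $G$-stability of $b$) block of $M$ covering $b$ and covered by $B$. For the $p'$-step $N \leq M$, the $M$-stability of $b$ combined with its inertial source-algebra structure (in Puig's sense) should yield a basic Morita equivalence over $\cO$ between $b$ and $b_M$, so Proposition~\ref{morita_O_proposition} gives $\SF(kb_M) = \SF(kb)$. For the $p$-step $M \leq G$, choose a chain of normal subgroups of $G$ from $M$ to $G$ with successive indices $p$, and apply Theorem~\ref{normal_index_p_theorem} along this chain (which is legitimate under the abelian defect group assumption implicit throughout the paper) to obtain $\SF(kB) \leq \SF(kb_M)$. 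Combining gives $\SF(kB) \leq \SF(kb) \leq n^{\frac{1}{2}\log_2(n-1)}$.

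Finally, the ``in particular'' clause is immediate from Puig's theorem (cited just before the lemma) that nilpotent covered blocks are inertial. The main obstacle I anticipate is the clean justification of the $p'$-extension step: showing that $b_M$ inherits an inertial structure from $b$ compatible with a basic Morita equivalence over $\cO$. This should follow from standard Dade-style $p'$-covering results combined with the source-algebra description of inertial blocks, but the technical bookkeeping there is the delicate part of the argument.
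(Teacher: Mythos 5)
Your overall architecture matches the paper's: identify $M \lhd G$ as the preimage of $O_{p'}(G/N)$ so that $N \leq M$ with $[M:N]$ coprime to $p$ and $G/M$ a $p$-group, bound the strong Frobenius number of the intermediate block $b_M$ by reducing to a block with normal defect group via Lemma~\ref{normal_defect} and Proposition~\ref{morita_O_proposition}, and then climb from $M$ to $G$ by repeated use of Theorem~\ref{normal_index_p_theorem}. The closing observation that nilpotent covered blocks are inertial (Puig) is also exactly what the paper does.

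The gap is in your treatment of the $p'$-step $N \leq M$. You assert that the inertial structure of $b$ ``should yield a basic Morita equivalence over $\cO$ between $b$ and $b_M$,'' and you flag this as the delicate point. In fact this is not what is true, and it is not the route the paper takes. A $p'$-extension $b_M$ of $b$ is in general \emph{not} Morita equivalent to $b$: the Clifford-theoretic crossed product over $M/N$ can split simple $b$-modules or otherwise change the basic algebra, even though the defect group $D\cap N$ is unchanged. The correct statement, which the paper invokes, is the main theorem of Zhou (\cite{zh16}): if $b$ is inertial and $G$-stable and $M/N$ is a $p'$-group, then $b_M$ is itself inertial. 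This says nothing about $b_M$ being Morita equivalent to $b$; it says $b_M$ is basic Morita equivalent to \emph{its own} Brauer correspondent in $N_M(D\cap N)$, which again has normal defect group $D\cap N$, so Lemma~\ref{normal_defect} and Proposition~\ref{morita_O_proposition} apply to $b_M$ directly. Your first paragraph (bounding $\SOF(b)$ via its own inertial structure) is therefore unnecessary, and your second-paragraph claim $\SF(kb_M) = \SF(kb)$ is unsupported; replacing it by the appeal to \cite{zh16} closes the gap and recovers the paper's argument.

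One smaller point: Theorem~\ref{normal_index_p_theorem} requires abelian defect groups. You note this is ``implicit throughout the paper,'' but the lemma statement itself does not impose it; in the application (the components $L_i$ in Theorem~\ref{rank_at_most_four}) the ambient block has abelian defect group, which is what licenses the repeated use of Theorem~\ref{normal_index_p_theorem} along the $p$-chain from $M$ to $G$. It is worth stating this hypothesis explicitly rather than leaving it implicit.
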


\begin{proof}
Let $M \lhd G$ be the preimage in $G$ of $O_{p'}(G/N)$. Then $G/M$ is a $p$-group. Let $b_M$ be a block of $\cO M$ covering $b$ and covered by $B$. Note that $b$ and $b_M$ both have defect group $D \cap N$. By the main theorem of~\cite{zh16} we have that $b_M$ is inertial, and so by Proposition~\ref{morita_O_proposition} $\SF(b_M)=\SF(C)$ for a block $C$ of a group with normal defect group isomorphic to  $D \cap N$. By Lemma \ref{normal_defect} $\SF(C) \leq n^{\frac{1}{2} \log_2(n-1)}$, where $n = |\Aut(D \cap N)|_{p'}$. Hence by Theorem~\ref{normal_index_p_theorem} we have that $\SF(B)\leq  n^{\frac{1}{2} \log_2(n-1)}$.
\end{proof}

\begin{lem}
\label{rank1or2}
Let $B$ be a block of $kG$ for a finite group $G$ with defect group $D \cong C_{2^n}$ or $C_{2^m} \times C_{2^n}$ for $m \neq n$. Then $B$ is nilpotent.
\end{lem}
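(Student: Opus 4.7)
The plan is to establish nilpotency by proving that the inertial quotient of $B$ is trivial, which in turn will follow from $|\Aut(D)|$ being a power of $2$ for each of the two families of $D$ under consideration.

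First I would reduce the problem to a statement about $\Aut(D)$. When $D$ is abelian, Alperin's fusion theorem trivialises: every $B$-subpair is already conjugate in $N_G(D,b_D)$, so the fusion system of $B$ on $D$ is controlled by $N_G(D,b_D)$, and $B$ is nilpotent if and only if the inertial quotient $E := N_G(D,b_D)/DC_G(D)$ is trivial (this is classical for abelian defect, going back to Brou\'e--Puig). Since $D$ is abelian we have $\Out(D) = \Aut(D)$, and the conjugation action realises $E$ as a $p'$-subgroup of $\Aut(D)$. It therefore suffices to show that $|\Aut(D)|$ is a power of $2$.

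Next I would verify this in the two cases. For $D \cong C_{2^n}$ it is immediate, since $\Aut(D) \cong (\mathbb{Z}/2^n)^\times$ has order $2^{n-1}$. For $D \cong C_{2^m} \times C_{2^n}$ with $m<n$ (after swapping factors if necessary), I would use the matrix description of endomorphisms: fixing generators $x$ of order $2^m$ and $y$ of order $2^n$, the condition that $\alpha(x)$ have order dividing $2^m$ forces $\alpha(x) = x^a y^{2^{n-m}c}$, while $\alpha(y) = x^b y^d$ is unconstrained. Invertibility is then equivalent to $a$ odd modulo $2^m$ and $d$ odd modulo $2^n$, with $b, c$ free, giving $|\Aut(D)| = 2^{3m+n-2}$, again a $2$-power.

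The argument presents no genuine obstacle; the only non-formal input is the classical reduction (for abelian defect) of nilpotency to triviality of the inertial quotient, which I would cite rather than reprove, after which everything is a short counting exercise.
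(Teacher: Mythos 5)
Your proof is correct and follows essentially the same route as the paper's: reduce nilpotency (for abelian defect) to triviality of the inertial quotient, then observe that the inertial quotient is a $2'$-subgroup of $\Aut(D)$, which is a $2$-group in both cases. The paper states the automorphism-group claim without the explicit matrix computation, but the argument is the same.
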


\begin{proof}
Since $D$ is abelian, $B$ is nilpotent precisely when $N_G(D,b_D)=C_G(D)$ for each maximal $B$-subpair $(D,b_D)$. The result then follows since $\Aut(D)$ is a $2$-group.
\end{proof}

\begin{lem}
\label{index2nilpotent}
Let $B$ be a block of $G$ with abelian defect group $D$. 

(i) Suppose that $N \lhd G$ and $B$ covers a $G$-stable block $b$ of $N$ with defect group $D \cap N \cong C_{2^m} \times C_{2^m}$ for some $m \in \mathbb{N}$. If $b$ is not nilpotent, then $D \cap N$ is a direct factor of $D$.

(ii) Suppose that $D \cong C_{2^m} \times C_{2^m}$ for some $m \in \mathbb{N}$ and that $O_2(Z(G))$ is nontrivial and cyclic. Then $B$ is nilpotent.
\end{lem}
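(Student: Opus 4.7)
The plan for part (i) is to reduce to the case $[G:N] = 2$ and then invoke Lemma~\ref{rank1or2}. If $[G:N]$ is odd then $D/E$, being a $2$-subgroup of $G/N$, is trivial, so $D = E$ and the claim is immediate. Otherwise $[G:N]$ is a $2$-power, and by iterating through a chain $N = N_0 \lhd N_1 \lhd \cdots \lhd N_k = G$ of index-$2$ extensions the general statement follows from the prime-index case, using transitivity of ``direct factor'' in abelian $2$-groups. So assume $[G:N] = 2$; after replacing $G$ by $DN$ we have $G = DN$ and $|D| = 2|E| = 2^{2m+1}$. Since $D$ is abelian containing $E \cong C_{2^m}\times C_{2^m}$ as an index-$2$ subgroup, the classification of finitely generated abelian groups gives $D \cong C_{2^m}\times C_{2^m}\times C_2$ or $D \cong C_{2^{m+1}}\times C_{2^m}$. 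In the first case $E$ is manifestly a direct factor. In the second, $D$ has two distinct invariant factors, so Lemma~\ref{rank1or2} gives that $B$ is nilpotent; but by standard nilpotent block theory a nilpotent block covers only nilpotent blocks, contradicting that $b$ is not nilpotent.

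For part (ii), set $Z = O_2(Z(G))$, a nontrivial cyclic $2$-subgroup contained in $D$. Write $|Z| = 2^k$ with $1 \leq k \leq m$. Since $\Aut(D) = GL_2(\mathbb{Z}/2^m\mathbb{Z})$ acts transitively on cyclic subgroups of $D$ of any given order, up to an isomorphism of $D$ we may take $Z = \langle (2^{m-k},0)\rangle$; then $D/Z \cong C_{2^{m-k}} \times C_{2^m}$ if $k < m$ and $D/Z \cong C_{2^m}$ if $k = m$. In either case $D/Z$ satisfies the hypothesis of Lemma~\ref{rank1or2}, so the block $B_Z$ of $k(G/Z)$ corresponding to $B$, which has defect group $D/Z$, is nilpotent.

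To lift nilpotency from $B_Z$ to $B$, observe that the inertial quotient $E_B$ of $B$ is a $2'$-subgroup of $\Aut(D)$ whose image in $\Aut(D/Z)$ is contained in the (trivial) inertial quotient of $B_Z$. Hence $E_B$ lies in the kernel of the natural map $\Aut(D)\to\Aut(D/Z)$. An element $\phi$ of this kernel satisfies $\phi(d)\in dZ$ for every $d \in D$, so it can be written as $\phi(d) = d\cdot\psi(d)$ for a homomorphism $\psi:D\to Z$; since $Z$ has $2$-power order, $\phi$ itself has $2$-power order, and the whole kernel is a $2$-group. A $2'$-subgroup of a $2$-group is trivial, so $E_B = 1$ and $B$ is nilpotent.

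The main obstacle is the chain reduction in part (i): at each stage of the chain one must ensure that the intermediate block $b_i$ of $N_i$ is $N_{i+1}$-stable (achievable, up to Morita equivalence, by passing to a Fong--Reynolds stabiliser) and non-nilpotent (preserved because nilpotent blocks cover only nilpotent blocks), so that Lemma~\ref{rank1or2} applies uniformly along the chain. Part~(ii) is more direct, the main verification being the elementary structural analysis of $D/Z$ and the description of the kernel of $\Aut(D)\to\Aut(D/Z)$.
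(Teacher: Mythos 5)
Part (ii) of your proposal is essentially correct, though it is reproving a special case of a theorem of Watanabe~\cite{wa94}, which the paper simply cites: the paper passes to $\bar B$ on $G/Z$ with defect group $D/Z \cong C_{2^m} \times C_{2^{m-l}}$, applies Lemma~\ref{rank1or2}, and invokes~\cite{wa94} to conclude $B$ is nilpotent. Your change-of-basis computation of $D/Z$ and the observation that the kernel of $\Aut(D)\to\Aut(D/Z)$ is a $2$-group (so the $2'$-group $E_B$ embeds into $E_{\bar B}=1$) are a valid, if more laborious, substitute for the citation.

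Part (i), however, has a genuine gap in the chain reduction. You propose to pass through $N = N_0 \lhd N_1 \lhd \cdots \lhd N_k = G$ with each step of index $2$ and to apply the prime-index case at each stage, using transitivity of ``direct factor''. But the prime-index case you establish needs the hypothesis that $D \cap N_i$ is homocyclic $C_{2^m}\times C_{2^m}$, and this fails after the very first step: if the first step forces $D\cap N_1 \cong C_{2^m}\times C_{2^m}\times C_2$ (which is precisely the non-contradiction branch of your analysis), then $D\cap N_1$ has rank $3$, so the dichotomy ``$D\cap N_2 \cong C_{2^m}^2\times C_2^2$ or $C_{2^{m+1}}\times C_{2^m}\times C_2$ or $C_{2^m}^2 \times C_4$'' is no longer resolvable by Lemma~\ref{rank1or2}, which only handles rank at most $2$. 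There is also a smaller imprecision: $[G:N]$ need not be odd or a $2$-power, so one must first replace $G$ by $DN$ (which requires checking that the relevant block and defect-group data descend) before a chain of index-$2$ steps even exists. The paper avoids the chain entirely: it shows that if $D\cap N$ is not a direct factor of $D$, one can find a single element $g\in D\setminus N$ with $g^2 \in (D\cap N)\setminus\{1\}$, forms the one index-$2$ subgroup $H=\langle N,g\rangle$, identifies a defect group $D_H$ of the unique block of $H$ over $b$ as $C_{2^{m+1}}\times C_{2^m}$ (so Lemma~\ref{rank1or2} applies to $B_H$), and then descends nilpotency to $b$ via~\cite[6.5]{kp90}. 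Your argument needs to be restructured along these lines rather than iterated.
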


\begin{proof}
(i) Suppose that $D \cap N$ is not a direct factor of $D$. Then there is $g \in D \setminus N$ such that $g^2 \in (D \cap N) \setminus \{ 1 \}$. Write $H=\langle N,g \rangle G$. Since $H/N$ is a $2$-group, there is a unique block $B_H$ of $H$ covering $b$ and so by~\cite[15.1]{alp} $B_H$ has a defect group $D_H$ containing $g$. Hence $D_H \cong C_{2^{m+1}} \times C_{2^m}$. By Lemma \ref{rank1or2} $B_H$ is nilpotent and hence so is $b$ by~\cite[6.5]{kp90}.

(ii) Write $Z=O_2(Z(G))$, with order $2^l$ say. Now $B$ corresponds to a block $\bar{B}$ of $G/Z$ with defect group $D/Z \cong C_{2^m} \times C_{2^{m-l}}$. By Lemma \ref{rank1or2} $\bar{B}$ is nilpotent and hence so is $B$ by~\cite{wa94}.
\end{proof}

Recall that a \emph{component} of $G$ is a subnormal quasisimple subgroup of $G$. The components of $G$ commute, and we define the \emph{layer} $E(G)$ of $G$ to be the normal subgroup of $G$ generated by the components. It is a central product of the components. The \emph{Fitting subgroup} $F(G)$ is the largest nilpotent normal subgroup of $G$, and this is the direct product of $O_r(G)$ for all primes $r$ dividing $|G|$. The \emph{generalized Fitting subgroup} $F^*(G)$ is $E(G)F(G)$. A crucial property of $F^*(G)$ is that $C_G(F^*(G)) \leq F^*(G)$, so in particular $G/F^*(G)$ may be viewed as a subgroup of $\Aut(F^*(G))$. Background on this may be found in~\cite{asc00}.

\begin{prop}[Proposition 8.1 of~\cite{ekks14}]
\label{reduce}
Let $P$ be an abelian $2$-group. In order to prove Donovan's conjecture for blocks with defect group $D \cong P$, then it suffices to show that $\mf(B)$ is bounded in terms of $P$ for all blocks $B$ of finite groups $kG$ with defect group $D \cong P$ satisfying the following:

(i) $B$ is quasiprimitive, that is, for every normal subgroup $N$ of $G$, every block of $N$ covered by $B$ is $G$-stable;

(ii) $O_2(G)O_{2'}(G) = Z(G)=F(G)$;

(iii) $G$ is generated by the defect groups of $B$;

(iv) If $N \lhd G$ and $B$ covers a nilpotent block of $N$, then $N \leq Z(G)$;

(v) Every component of $G$ is normal in $G$;

(vi) If $N \leq G$ is a component, then $N \cap D \neq Z(N) \cap D$;

(vii) $DF^*(G)/F^*(G) \in \Syl_p(G/F^*(G))$.
\end{prop}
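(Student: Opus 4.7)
The plan is to cite~\cite[Proposition 8.1]{ekks14} directly, since the statement reproduced here is verbatim from that source. For transparency I would also sketch the strategy: apply a sequence of Morita-equivalence-preserving reductions, each of which either strictly decreases $|G|$ or strictly simplifies $F^*(G)$ while keeping the defect group $D$ isomorphic to $P$, so that a uniform bound on $\mf(B)$ for the restricted class implies the same bound for the original class.

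I would first handle (i) via Fong--Reynolds: a non-$G$-stable covered block $b$ of a normal subgroup $N$ lets us replace $B$ by its Fong--Reynolds correspondent in $I_G(b)$, which has the same defect group and is Morita equivalent to $B$. Condition (iii) is arranged by passing to $\langle D^G\rangle$. For (ii), I would iterate three tools: quotient by central $p$-subgroups (preserving the Morita class, via~\cite{wa94}-type statements and Proposition~\ref{central_p_subgroup}); apply K\"ulshammer's reduction~\cite{ku95} to absorb the $2'$-part into the inertial quotient; and then re-apply (i) and (iii). Condition (iv) relies on Puig's nilpotent block theorem: a nilpotent block covered on some $N\not\leq Z(G)$ yields a Morita equivalence of $B$ to a block on a strictly smaller piece of the group structure, so repeated application eventually forces $N\leq Z(G)$. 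For (v) one groups non-normal components into $G$-orbits and reduces to the orbit stabilizer via an induction-type Morita equivalence; for (vi) a component $N$ with $D\cap N = Z(N)\cap D$ produces a covered block with central defect, hence nilpotent, which (iv) has already eliminated. Finally (vii) is obtained by replacing $G$ with the preimage of a Sylow $2$-subgroup of $G/F^*(G)$ containing the image of $D$, which does not affect the Morita class.

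The main obstacle is compatibility: each reduction has the potential to undo the conditions established by previous ones, so the reductions must be iterated in a carefully chosen order and one must verify termination. This bookkeeping is the technical substance of~\cite[\S 8]{ekks14}, and I would follow their ordering rather than invent a new one.
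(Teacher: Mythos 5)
Your proposal misreads the statement: it is \emph{not} verbatim from~\cite[Proposition 8.1]{ekks14}. The paper's version differs from the original in two substantive ways, and both of those differences are exactly what the paper's proof has to address. First, condition (vii) does not appear in the original proposition; the paper must show (vii) can be added for free. Second, the original conclusion is that there are only finitely many Morita equivalence classes among the blocks satisfying (i)--(vi), whereas the version here concludes that it suffices to bound the \emph{Morita Frobenius number} $\mf(B)$. Your plan to ``cite directly'' therefore leaves both of these steps unproved.

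On (vii): your proposed remedy (``replace $G$ with the preimage of a Sylow $2$-subgroup of $G/F^*(G)$ containing the image of $D$, which does not affect the Morita class'') is not valid --- restriction to a subgroup does not in general preserve the Morita equivalence class of a block, and even when one can move to a subgroup one would risk destroying conditions (i)--(vi) already established. The paper instead observes that (vii) is \emph{automatic} once (i)--(vi) hold: $G/F^*(G)$ embeds in $\operatorname{Out}(F^*(G))$, outer automorphism groups of simple groups are solvable, so $G/F^*(G)$ is solvable; one then takes a chief series with $p$- or $p'$-factors above $F^*(G)$ and applies Lemma~\ref{index_p_background_lem} at each layer to see that $D$ maps onto a Sylow $2$-subgroup of $G/F^*(G)$. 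No new reduction of $G$ is needed.

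On the conclusion: you never explain why bounding $\mf(B)$ suffices, and this is not a cosmetic change. The paper closes the gap by invoking the Cartan-invariant bound of~\cite[9.2]{ekks14} (the entries of the Cartan matrices for blocks with fixed defect group $P$ are uniformly bounded) together with the argument in the proof of~\cite[1.4]{ke04}, which shows that bounded Cartan invariants plus bounded Morita Frobenius number imply finitely many Morita equivalence classes. Without this step the proposition as stated is not established. Your background sketch of how (i)--(vi) are arranged in~\cite{ekks14} is reasonable as orientation, but it duplicates a citation rather than proving the two genuinely new claims in the present formulation.
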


\begin{proof}
By~\cite[8.1]{ekks14} (which is an application of~\cite{du04} and~\cite{kp90}) in order to prove Donovan's conjecture it suffices to show that there are only finitely many Morita equivalence classes amongst blocks $B$ as in (i)-(vi). We show that (vii) follows from (i)-(vi). Since outer automorphism groups of simple groups are solvable, it follows that $G/F^*(G)$ is solvable. There is a chief series $F^*(G)=G_0 \lhd \cdots \lhd G_r=G$ such that each $G_{i+1}/G_i$ is either a $p$-group or a $p'$-group. Let $B_i$ be the unique block of $kG_i$ covered by $B$. (vii) follows by Lemma \ref{index_p_background_lem} applied to each $G_i \lhd G_{i+1}$ in turn.
\newline
\newline
It remains to show that it suffices to bound the Morita Frobenius number amongst these blocks. By~\cite[9.2]{ekks14} the entries of the Cartan matrices of blocks with defect group $D \cong P$ are bounded in terms of $P$. Hence by the proof of~\cite[1.4]{ke04} if the Morita Frobenius number of a collection of blocks with defect group isomorphic to $P$ is bounded in terms of $P$, then there are only finitely many Morita equivalence classes amongst these blocks, and we are done.
\end{proof}

\begin{defi}
For the purpose of this paper we call a pair $(G,B)$ \emph{reduced} if it satisfies the conditions of Proposition \ref{reduce}.
\end{defi}

The possible components are described in~\cite{ekks14}:

\begin{thm}[\cite{ekks14}]   
\label{component_classification}  
Let $G$ be a quasi-simple group.  If $B$ is a $2$-block of $\cO G$ with  abelian defect group $D$,  then one (or more) of the following holds:

(i) $G/Z(G)$ is one of  $A_1(2^a)$,   $\,^2G_2(q)$ (where $q \geq 27$ is a power of $3$ with odd exponent), or $J_1$,  $B$  is the principal block and $D$ is elementary abelian;

(ii) $G$  is $Co_3 $,   $B$ is a non-principal block, $D \cong C_2 \times C_2 \times  C_2 $ (there is one such block);

(iii) $B$ is a nilpotent covered block and $G/Z(G)$ is one of $PSL_n(q)$, $PSU_n(q^2)$, $E_6(q)$ or $^2E_6(q)$ for some $n \in \mathbb{N}$ and some power $q$ of an odd prime, and further $D$ cannot be elementary abelian of order $2^d$ with $d$ odd;

(iv) $G$ is of type $D_n(q)$ or $E_7(q)$, where $n=2t$ for $t$ odd and $q$ is a power of an odd prime. $B$ is Morita equivalent  to a block  $C$ of a   subgroup   $L=L_0 \times L_1$   of $G$   as  follows:   The  defect groups   of $C$ are isomorphic to  $D$,    $L_0$   is abelian  and the  $2$-block of  $L_1 $ covered by $C$ has  Klein $4$-defect groups.

(v) $B$ has Klein four defect groups.
\end{thm}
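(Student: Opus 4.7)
The plan is to proceed by the classification of finite simple groups, enumerating the quasisimple groups $G$ admitting a $2$-block with abelian defect and matching each such configuration to one of the five listed cases. This is a synthesis of results from the block-theoretic literature for the various families, as carried out in~\cite{ekks14}.

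First I would dispose of the alternating groups and their covers, where the Sylow $2$-subgroups of $A_n$ are non-abelian for $n \geq 6$ by the wreath-product description, so the only contributions are small cases falling into case~(v). Then for the sporadic groups and their covers, a direct enumeration using the ATLAS and the decomposition-matrix literature leaves only the principal $2$-block of $J_1$ (part of case~(i)), the unique non-principal $2$-block of $Co_3$ with defect $C_2 \times C_2 \times C_2$ (case~(ii)), and a handful of Klein-four blocks in case~(v). For Lie type groups in defining characteristic $2$, Humphreys' theorem reduces everything to the principal block, whose defect group is a Sylow $2$-subgroup; this is abelian only for the family $A_1(2^a)$, producing the corresponding entry of case~(i).

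The bulk of the work, and the main obstacle, is the groups of Lie type in odd characteristic. Here I would use the $e$-Harish-Chandra theory of Brou\'e--Malle--Michel together with the block-theoretic refinements of Cabanes--Enguehard, and the Jordan-type Morita reduction for non-unipotent blocks due to Bonnaf\'e--Rouquier (extended by Bonnaf\'e--Dat--Rouquier). Classifying the $2$-blocks with abelian defect yields: the Ree groups $\,^2G_2(q)$ with elementary abelian Sylow $2$-subgroup of order $8$ as the remaining contribution to case~(i); the types $PSL_n(q)$, $PSU_n(q^2)$, $E_6(q)$ and $\,^2E_6(q)$ producing nilpotent-covered blocks via Puig's extension theory applied in the presence of diagonal automorphisms, giving case~(iii); and the types $D_n(q)$ with $n=2t$ odd, and $E_7(q)$, producing blocks Morita equivalent to a block of a Levi-type subgroup $L_0\times L_1$ whose second factor carries a Klein-four-defect block, giving case~(iv). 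Everything not falling into (i)--(iv) has Klein-four defect and is absorbed into case~(v).

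The hard part is case~(iv): establishing the Morita equivalence with a block of $L_0 \times L_1$ requires Enguehard's detailed analysis of the isolated $2$-blocks of $D_n$ and $E_7$ together with the Bonnaf\'e--Dat--Rouquier reduction, and one must verify that the Klein-four factor really appears in the asserted form. Case~(iii) also needs care to distinguish \emph{nilpotent covered} from the weaker \emph{inertial} property, and to pin down the parity restriction on elementary abelian defects of odd rank, which comes from the structure of the centralizer of a Sylow $2$-subgroup of a standard maximal torus. Once these inputs are assembled, the remaining verifications are routine.
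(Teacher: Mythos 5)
The paper does not prove this theorem; it imports it. The displayed proof is a single sentence of attributions: the result is taken from Propositions~5.3, 5.4 and Theorem~6.1 of~\cite{ekks14}, with the odd-rank parity restriction in~(iii) drawn additionally from~\cite[12.1,\,13.1]{kkl12}. Indeed, the theorem header itself carries the tag \verb|[\cite{ekks14}]|, signalling that this is a quoted result from the literature, not something established in the present paper. Your proposal, by contrast, attempts to reconstruct the internals of the argument behind~\cite{ekks14}: the CFSG case split (alternating, sporadic, Lie type in defining and non-defining characteristic), $e$-Harish-Chandra theory \`a la Brou\'e--Malle--Michel and Cabanes--Enguehard, Jordan decomposition of blocks, and Enguehard's analysis of isolated blocks of $D_n$ and $E_7$. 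That is the right circle of ideas for the \emph{cited} paper, but it is not what this paper does, and reproducing it would be out of scope here.

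Two smaller inaccuracies in your sketch, should you ever need to trace the real proof: you invoke the Bonnaf\'e--Dat--Rouquier extension of the Jordan-type Morita reduction, but that paper postdates~\cite{ekks14} (2014), which could only rely on the original Bonnaf\'e--Rouquier theorem together with Fong--Srinivasan and Enguehard; and the parity restriction on elementary abelian defects of odd rank in~(iii) is not left as a structural observation about maximal tori but is an explicit citation to~\cite[12.1,\,13.1]{kkl12}, which your account does not identify. Since the paper's ``proof'' is nothing more than a correct set of pointers, the substantive thing to verify is that the attributions are accurate and that the statement as reproduced matches the sources, which it does.
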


\begin{proof}
This is taken from Propositions 5.3 and 5.4 and Theorem 6.1 of~\cite{ekks14}. The statement in (iii) regarding blocks of odd defect comes from~\cite[5.4]{ekks14} and~\cite[12.1,13.1]{kkl12}.  
\end{proof}

We summarize information about the outer automorphism groups of the components here. Recall that the $p$-length of a $p$-solvable group $G$ is the smallest $m$ such that there is a normal series $1=G_0 \lhd \cdots \lhd G_n=G$ such that each $G_{i+1}/G_i$ is either a $p$-group or a $p'$-group and precisely $m$ factors $G_{i+1}/G_i$ are $p$-groups.

\begin{lem}
\label{outer}
Let $G$ be a non-abelian simple group. Then 

(i) $\Out(G)$ has $2$-length one unless $G=D_4(q)$ for $q$ a power of an odd prime;

(ii) $\Out(D_4(q)) \cong (C_2 \times C_2) \rtimes (C \times S_3)$ when $q$ is a power of an odd prime, where $C$ is a cyclic group;

(iii) If $G$ is of type $D_n(q)$ or $E_7(q)$, where $n=2t$ for $t$ odd and $q$ is a power of an odd prime, then $\Out(G)$ has a normal Sylow $2$-subgroup.
\end{lem}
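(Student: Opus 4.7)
The plan is to prove Lemma \ref{outer} by invoking the classification of finite simple groups together with the well-known structure of $\Out(G)$ for each family. First, the non-Lie-type cases dispose of easily: for $G=A_n$ with $n\geq 5$ we have $\Out(A_n)$ a $2$-group of order at most $4$ (with order $4$ only when $n=6$), and for sporadic $G$ we have $|\Out(G)|\leq 2$. In both cases $\Out(G)$ has $2$-length at most $1$ trivially, so (i) reduces to the case where $G$ is of Lie type.

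For $G$ of Lie type I will use the standard decomposition $\Out(G)=D\cdot \Phi\cdot \Gamma$, where $D$ is the (abelian) group of diagonal automorphisms, $\Phi$ the cyclic group of field automorphisms, and $\Gamma$ the group of graph automorphisms induced by symmetries of the Dynkin diagram. A key feature is that $|\Gamma|\leq 2$ in all cases except $G=D_4(q)$, where $\Gamma\cong S_3$. For $G\neq D_4(q)$, set $N:=O_{2'}(D\cdot \Phi)$; this is characteristic in the normal subgroup $D\cdot \Phi$ of $\Out(G)$, hence normal in $\Out(G)$. The quotient $(D\cdot\Phi)/N$ is a $2$-group, and $\Out(G)/(D\cdot\Phi)$ is a quotient of $\Gamma$, of order at most $2$, so $\Out(G)/N$ is itself a $2$-group. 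This establishes (i).

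For (ii), when $G=D_4(q)$ with $q=p^f$ and $p$ odd, one has $D\cong C_2\times C_2$, $\Phi\cong C_f$ and $\Gamma\cong S_3$. The action of $\Phi$ on $D$ is via the $p$-power map, and since $D$ is $2$-torsion and $p$ is odd this action is trivial. Field and graph automorphisms commute in the Lie type setup, so $\Phi$ and $\Gamma$ commute. This yields $\Out(G)=(C_2\times C_2)\rtimes(\Phi\times\Gamma)=(C_2\times C_2)\rtimes(C\times S_3)$, with $C=\Phi$ cyclic.

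For (iii), the group $E_7$ has no nontrivial Dynkin diagram symmetry, so $\Out(E_7(q))=D\times\Phi\cong C_2\times C_f$ is abelian when $q$ is odd, and its Sylow $2$-subgroup is trivially normal. For $G$ of type $D_n(q)$ with $n=2t$, $t$ odd, $t\geq 3$, and $q$ odd, one has $D\cong C_2\times C_2$, $\Phi\cong C_f$ and $\Gamma\cong C_2$. As in (ii), $\Phi$ acts trivially on $D$ and commutes with $\Gamma$, and $\Gamma$ acts on $D$ by swapping its two canonical generators. Hence $\Out(G)=(D\rtimes\Gamma)\times\Phi$, where $D\rtimes\Gamma$ is a $2$-group of order $8$, so the Sylow $2$-subgroup $(D\rtimes\Gamma)\times\Phi_2$ of $\Out(G)$ is normal. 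The only nontrivial input throughout is the precise form of the actions in $\Out(G)$ for the Lie type groups, but this is standard material from the classification; the crucial observation that $\Phi$ acts trivially on the $2$-torsion group $D$ in odd characteristic is elementary.
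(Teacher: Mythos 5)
Your proof is correct and follows essentially the same route as the paper, which simply cites the explicit description of $\Out(G)$ in \cite[2.5.12]{GLS} and notes that graph and field automorphisms commute; your argument unpacks that description via the $D\cdot\Phi\cdot\Gamma$ decomposition. One step worth flagging: the assertion that $(D\cdot\Phi)/O_{2'}(D\cdot\Phi)$ is a $2$-group is not a formal consequence of $D\cdot\Phi$ being abelian-by-cyclic (e.g.\ $A_4$ is abelian-by-cyclic but not $2$-nilpotent); it does hold here, but only because of the specific $p$-power action of $\Phi$ on $\mathrm{Outdiag}(G)$ recorded in \cite[2.5.12]{GLS}, so that appeal should be made explicit rather than left implicit in the closing sentence.
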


\begin{proof}
This may be deduced directly from~\cite[2.5.12]{GLS}, noting in part (iii) that graph automorphims commute with field automorphisms.
\end{proof}


\section{Donovan's conjecture for abelian $2$-groups of rank at most four and for abelian $2$-groups of order at most $64$}
\label{rank4}

For $P$ a finite $p$-group write $\rk_p(P)$ for the rank of $P$, that is, the $p^{\rk_p(P)}$ is the order of the largest elementary abelian subgroup of $P$.

\begin{thm}
\label{rank_at_most_four}
Let $P$ a finite abelian $2$-group of rank at most $4$ or of order at most $64$. Then there are only finitely many Morita equivalence classes of blocks $B$ of group algebras $kG$ with defect group $D \cong P$.
\end{thm}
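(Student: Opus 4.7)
The plan is to bound $\mf(kB)$ for blocks $B$ of $kG$ with defect group $D \cong P$ and, combined with the Cartan-invariant bound from~\cite[9.2]{ekks14}, conclude by~\cite{ke04}. By Proposition~\ref{reduce} it suffices to consider reduced pairs $(G,B)$, and we shall in fact bound $\SF(kB)$ using the machinery of Sections~3 and~4. Note that an abelian $2$-group of order at most $64$ and rank greater than $4$ is either elementary abelian (so covered by~\cite{ekks14}) or isomorphic to $C_4 \times C_2^4$; hence only this last group requires an argument beyond the rank-at-most-$4$ analysis.

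For a reduced pair, condition~(ii) of Proposition~\ref{reduce} gives $F^*(G) = E(G) Z(G)$ and condition~(v) forces each component $L_i$ to be normal in $G$. Writing $b_i$ for the block of $\cO L_i$ covered by $B$, condition~(vi) says that $D \cap L_i$ is non-central abelian, with rank and order bounded by those of $P$. I would run through the cases of Theorem~\ref{component_classification}: in~(i) and~(ii) the isomorphism type of $(L_i/Z(L_i), b_i)$ is forced into a finite list by the bounded defect data, so $\SOF(b_i)$ is trivially bounded; in~(iii) the block is nilpotent covered and Lemma~\ref{nil_covered} bounds $\SF(b_i)$ in terms of $|P|$; in~(iv) and~(v) the block is Morita equivalent to a block with Klein-four defect, where the known Donovan theorem in that defect, together with Proposition~\ref{morita_O_proposition}, controls $\SOF(b_i)$. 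The number of components is bounded by rank considerations, and the central-product structure of $E(G)$ combines these bounds into a bound on $\SOF$ of the block of $\cO E(G)$ covered by $B$. Incorporating $Z(G)$ (using Proposition~\ref{central_p_subgroup} to strip off $O_2(Z(G)) \leq D$ and a Schur-multiplier estimate in the spirit of Lemma~\ref{normal_defect} for $O_{2'}(Z(G))$) yields a bound on $\SOF$ of the block of $\cO F^*(G)$ covered by $B$.

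To lift from $F^*(G)$ to $G$, observe that by~(vii) the $2$-part of $G/F^*(G)$ has order dividing $|D|$, and by Schreier's conjecture together with Lemma~\ref{outer} the $2'$-part of $G/F^*(G)$ is bounded in terms of $|P|$ and the isomorphism types of the components, which are themselves bounded. Along a chief series refining $F^*(G) \lhd G$, each step of index $2$ is handled by Theorem~\ref{normal_index_p_theorem}, preserving the $\SF$ bound, while each $2'$-step is handled by Theorem~\ref{normal_p'_index_theorem}, which bounds $\mf$ in terms of $\SF$, $|D|$ and the basic-algebra dimension (the latter being controlled by~\cite[9.2]{ekks14}). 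This chain of reductions suffices for all abelian $2$-groups of rank at most $4$. The main obstacle, and the reason the order-$64$ case requires separate treatment, is $D \cong C_4 \times C_2^4$: for this group a $2'$-extension can remain after all $2$-extensions have been exploited, and Theorem~\ref{normal_p'_index_theorem} alone does not suffice to push the bound through. Here I would invoke Corollary~\ref{powers_to_centre:cor}, choosing $g \in D$ to be a generator of the $C_4$ factor so that $g^2$ lies in $Z(N) \cap L$ for an appropriate chain $L \lhd N \lhd G$ with matching defect groups; this produces the required bound on $\mf(B)$, and combined with the Cartan bound and~\cite{ke04} completes the proof.
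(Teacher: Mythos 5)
Your high-level strategy is the right one and you have correctly identified the key tools (Proposition~\ref{reduce}, Theorem~\ref{component_classification}, Theorems~\ref{normal_index_p_theorem} and~\ref{normal_p'_index_theorem}, Corollary~\ref{powers_to_centre:cor}), but there is a genuine gap in the step that lifts from $F^*(G)$ to $G$.

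You propose to run along an arbitrary chief series of $G/F^*(G)$, handling each index-$2$ step with Theorem~\ref{normal_index_p_theorem} (which preserves $\SF$) and each $2'$-step with Theorem~\ref{normal_p'_index_theorem}. The problem is that Theorem~\ref{normal_p'_index_theorem} only outputs a bound on $\mf$, not on $\SF$: the isomorphism $\phi:B\to B^{(p^n)}$ it produces is controlled only on $Z(kG)\cap kN$, not on all of $Z(kG)$, so it is not a strong Frobenius isomorphism (this is exactly the obstruction highlighted in the remark following Theorem~\ref{normal_p'_index_theorem}). Consequently, once a $2'$-step has been taken you cannot feed the result into Theorem~\ref{normal_index_p_theorem} for any subsequent $2$-step. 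Chief series in which a $2$-step sits above a $2'$-step do occur: by Lemma~\ref{outer}, $\Out(D_4(q))\cong(C_2\times C_2)\rtimes(C\times S_3)$ has $2$-length greater than one, so for a component of type $D_4(q)$ the chain $2$, $2'$, $2$ genuinely arises. The paper avoids this by exploiting the case-by-case structure of Theorem~\ref{component_classification} and Lemma~\ref{outer}: it first establishes $t\leq 2$ components via rank considerations, then in each case arranges matters so that $2'$-steps occur only at the very top (so the final invocation of Theorem~\ref{normal_p'_index_theorem} is the last step), or uses Klein-four information to replace the offending piece by a bounded Morita class directly, or (in the one case where a $2$-step does sit above a $2'$-step, namely $D\cong C_4\times C_2^4$ with $O_2(G)\neq 1$ and $D\not\leq L$) invokes Corollary~\ref{powers_to_centre:cor}.

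A related misconception: you describe the obstacle for $C_4\times C_2^4$ as a $2'$-extension remaining after the $2$-extensions, and say Theorem~\ref{normal_p'_index_theorem} ``alone does not suffice.'' In fact a trailing $2'$-extension is exactly what Theorem~\ref{normal_p'_index_theorem} handles; what it cannot handle is a $2$-extension sitting above it, and that is the situation Corollary~\ref{powers_to_centre:cor} is designed for (an element $g\in D\setminus N$ with $g^p$ central in $N$ permits direct construction of the needed isomorphism without having to preserve the strong Frobenius condition through the $2'$-step). You do correctly choose $g$ as a generator of the $C_4$ factor, so the right picture is close at hand. Finally, a small point: in cases~(i) and~(ii) of Theorem~\ref{component_classification} the isomorphism type of the component is not bounded by the defect data (for example $\,^2G_2(q)$ has elementary abelian defect group of order $8$ for infinitely many $q$); the correct reason $\SOF(b_L)$ is bounded there is Proposition~\ref{sf_bound_with_iso} applied to principal blocks. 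Your Proposition~\ref{central_p_subgroup} step would also need to be reoriented: that proposition passes a bound from a block of a central extension $E$ down to the block of the quotient $E/Z$, so one should lift $F^*(G)$ to a suitable direct-product central extension and bound there, rather than ``strip off'' $O_2(Z(G))$ and try to pass the bound back up.
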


\begin{proof}
First of all, by~\cite{ekks14} the result holds for elementary abelian $2$-groups, so we may assume that $P$ is not elementary abelian and that $\rk_2(P)\leq 5$.
\newline
\newline
Since $\mf(B) \leq \SOF(B)$ for all blocks $B$, by Proposition \ref{reduce} it suffices to  show that $\SOF(B)$ is bounded in terms of $P$ for reduced pairs $(G,B)$ satisfying the given list of properties with defect group $D \cong P$. We show that: if $\rk_2(P) \leq 4$, then $\SOF(B)$ is at most the maximum of $(|P|)^2!$ and $|\Aut(Q)|_{2'}^{\frac{1}{2} \log_2 (|\Aut(Q)_{2'}|-1)}$ as $Q$ ranges over the subgroups of $P$; if $|P|=64$ and $\rk_2(P)=5$, then $\mf(B)$ is at most the maximum of $(128^2)!$, $f(64,(32^2)!,648)$ and $|\Aut(Q)|_{2'}^{\frac{1}{2} \log_2 (|\Aut(Q)_{2'}|-1)}$ as $Q$ ranges over the subgroups of $P$, for $f$ as in Theorem \ref{normal_p'_index_theorem}. These bounds are only given because we must show that $\SOF(B)$ is bounded in terms of the defect groups - they are not intended to be efficient bounds.
\newline
\newline
All blocks in the remainder of the proof are defined with respect to the discrete valuation ring $\cO$. In order to use the results of~\cite{ekks14} we require the assumption that $\cO$ contains a primitive $|P|^{\operatorname{th}}$ root of unity. 
\newline
\newline
Let $(G,B)$ be a reduced pair with $D \cong P$. Write $L:=E(G) =L_1 \cdots L_t$, where $L_1, \ldots,L_t$ are the components of $G$ and write $b_L$ for the unique block of $L$ covered by $B$, so $b_L$ has defect group $D \cap L$. By definition $L_i \lhd G$ for each $i$ and the $L_i$'s intersect pairwise in subgroups of $Z(G)$ (and so commute). Let $b_i$ be the unique block of $L_i$ covered by $B$ and write $\bar{b}_i$ for the unique block of $L_iO_2(G)/O_2(G)$ corresponding to $b_i$. Then $D_i : = D \cap L_i$ is a defect group for $b_i$ and $\bar{D}_i=D_iO_2(G)/O_2(G)$ is a defect group for $\bar{b}_i$.
\newline
\newline
We may assume that $t \geq 1$, otherwise $F^*(G)=Z(G)$ and $C_G(F^*(G)) \neq F^*(G)$ implies that $G=D$. We show that $ t \leq 2$. Now $O_2(L) \leq Z(G)$ and $(D \cap L)/O_2(L)$ is a defect group for a block of $L/O_2(L)$. Therefore $(D \cap L)/O_2(L)$ is a radical $2$-subgroup of $L/O_2(L)$ (recall that a $p$-subgroup $Q$ of a finite group $H$ is radical if $Q=O_p(N_H(Q))$ and that defect groups are radical $p$-subgroups). Hence $(D \cap L)Z(L)/Z(L)$ is a radical $2$-subgroup of $L/Z(L)\cong (L_1Z(L)/Z(L)) \times \cdots \times (L_tZ(L)/Z(L))$. By~\cite[Lemma 2.2]{ou95} it follows that $(D \cap L)Z(L)/Z(L) = \bar{R}_1 \times \cdots \times \bar{R}_t$, where $R_i = (D \cap L)Z(L)/Z(L) \cap L_iZ(L)/Z(L) \cong \overline{D}_i$ (and $R_i$ is a radical $2$-subgroup but not necessarily a defect group). If $\rk_2(R_i) \leq 1$ for some $i$, then $\bar{b}_i$ has cyclic defect group and so is nilpotent, hence $b_i$ is also nilpotent by~\cite{wa94} (where the result is stated over $k$, but follows over $\cO$ immediately), contradicting $(G,B)$ being a reduced pair. Hence $\rk_2(R_i)>1$ for all $i$. Hence since $D$ is abelian $\sum_{i=1}^t \rk_2(R_i) =\rk_2(D/O_2(G)) \leq \rk_2(D) \leq 5$ and we have $t \leq 2$. We treat the cases $t=1$ and $t=2$ separately.
\newline
\newline
Suppose that $t=1$, so $L$ is quasisimple. We consider the possibilities for $L$ (as described in Theorem \ref{component_classification}) in turn. Since $F^*(G)=LZ(G)$ we have $G/Z(G) \leq \Aut(LZ(G)/Z(G))$. In the following note that $O^{2'}(G)=G$, so if $G/L$ has a normal Sylow $2$-subgroup, then $G/L$ is a $2$-group and if $G/L$ has $2$-length one, then $G/L$ is $2$-nilpotent.
\newline
\newline
If $b_L$ is as in (i) or (ii) of Theorem \ref{component_classification}, then $b_L$ is Morita equivalent to a principal block and so $\SOF(b_L) \leq (|D \cap L|^2)!$ by Proposition \ref{sf_bound_with_iso}. By Lemma \ref{outer} $G/LZ(G)$, and so $G/L$, is a $2'$-group, hence $G/L$ must be trivial. So $\SOF(B) \leq (|P|^2)!$.
\newline
\newline
Suppose that $b_L$ is as in (iii), so is nilpotent covered. By Lemma \ref{outer} $G/LZ(G)$, and so $G/L$, has $2$-length one. Hence $G/L$ is $2$-nilpotent and so by Lemma \ref{nil_covered} $\SOF(B) \leq |\Aut(Q)|_{2'}^{\frac{1}{2} \log_2 (|\Aut(Q)|_{2'}-1)}$ where $Q=D \cap N$.
\newline
\newline
Suppose that $L$ is as in (iv), so $L$ is of type $D_n(q)$ or $E_7(q)$ and $b_L$ is Morita equivalent to a principal block. By Proposition \ref{sf_bound_with_iso} $\SOF(b_L) \leq (|D \cap L|^2)!$. By Lemma \ref{outer} $G/LZ(G)$, and so $G/L$, has a normal Sylow $2$-subgroup. Hence $G/L$ is a $2$-group and so by Theorem \ref{normal_index_p_theorem} $\SOF(B) \leq (|D \cap L|^2)! \leq (|P|^2)!$.
\newline
\newline
Suppose that $b_L$ has Klein four defect group. By Lemma \ref{index2nilpotent} $D \cap L$ is a direct factor of $D$. Suppose first that $L$ does not have type $D_4(q)$. Then by Lemma \ref{outer} $G/LZ(G)$, and so $G/L$, has $2$-length one. Hence $G/L$ has a normal $2$-complement. The unique block $c$ of $O^2(G)$ covered by $B$ has defect group $C_2 \times C_2$ and so is Morita equivalent to a principal block, so by Proposition \ref{sf_bound_with_iso} $\SOF(c) \leq 16!$ and so by Theorem \ref{normal_index_p_theorem} $\SOF(B) \leq 16! \leq (|P|^2)!$. Now suppose that $L$ is of type $D_4(q)$. By Lemma \ref{outer} there are normal subgroups $M$ and $N$ of $G$ with $L \leq M \leq N$ such that $M/L$ is a subgroup of $C_2 \times C_2$, $N/M$ is a $2'$-group and $G/N$ is a $2$-group. Let $b_M$, $b_N$ be the unique blocks of $\cO M$ and $\cO N$ covered by $B$ respectively. Now using Lemma \ref{index2nilpotent} $b_N$ has elementary abelian defect groups of rank at most $4$, so by~\cite{ea17} $b_N$ is Morita equivalent to a block $C$ of some finite group with $C^{(2)}=C$. Hence by Proposition \ref{morita_O_proposition} and Proposition \ref{sf_bound_with_iso} $\SOF(b_M) \leq (|D \cap M|^2)!$, and so by Theorem \ref{normal_index_p_theorem} $\SOF(B) \leq (|D \cap M|^2)! \leq (|P|^2)!$.
\newline
\newline
Now consider the case $t=2$.  Recall that $\rk_2(R_1), \rk_2(R_2) > 1$. Suppose first that $\rk_2(P) \leq 4$. Then by Lemma \ref{rank1or2} $b_i$ has defect group $D_i \cong C_{2^{n_i}} \times C_{2^{n_i}}$ for some $n_i \geq 1$. Since neither $b_1$ nor $b_2$ is nilpotent, and $L_i \lhd G$, by Lemma \ref{index2nilpotent} both $D_1$ and $D_2$ are direct factors of $D$ and $O_2(G)=1$. Hence $D =D_1 \times D_2$ and $D \leq L$, so $G=L$ by condition (iii) for $(G,B)$ to be a reduced pair. There is a finite group $E$ with $O_2(E)=1$ and $Z \leq Z(E)$ such that $E/Z=G$ and $E$ may be written $E=E_1 \times E_2$ where: (i) each $E_i$ is quasisimple; (ii) the unique block $B_E$ of $E$ with $Z$ in its kernel corresponding to $B$ is Morita equivalent to $B$; (iii) $B_E=b_{E_1} \otimes b_{E_2}$, where $B_{E_i}$ is a block of $E_i$ with defect group isomorphic to $D_i$. By~\cite[1.1]{ekks14} $B_{E_1}$ and $B_{E_2}$ are Morita equivalent to a principal block since they have homocyclic defect groups. Hence $B_E$ is also Morita equivalent to a principal block and so $\SOF(B) \leq \SOF(B_E) \leq (|P|^2)!$.
\newline
\newline
For the remainder of the proof suppose that $\rk_2(P) > 4$, hence we may assume $P \cong C_4 \times (C_2)^4$.
\newline
\newline
We treat the cases (a) $O_2(L)=1$ and (b) $O_2(L) \neq 1$ separately.

(a) Suppose that $O_2(L)=1$ (noting that we are not assuming that $O_2(G)=1$). It follows from Lemma \ref{index2nilpotent} that either $D \leq L$, or $D \cap L$ is elementary abelian of order $2^4$ or $2^5$.
\newline
\newline
If $D \leq L$, then $G=L$. After taking a suitable central extension by a $2'$-group as above we may assume that $G=L_1 \times L_2$. Then without loss of generality $b_1$ has defect group $(C_2)^2$ and $b_2$ has defect group $C_4 \times C_2$ (otherwise one of $b_1$, $b_2$ is nilpotent). By Theorem \ref{component_classification} $b_1$ and $b_2$ are Morita equivalent to principal blocks (if $b_2$ is nilpotent covered, then it is Morita equivalent to a principal block since by~\cite{pu11} nilpotent covered blocks are Morita equivalent to their Brauer correspondent, and the inertial quotient in this case is cyclic). Hence $B$ is Morita equivalent to a principal block and $\SOF(B) \leq (|P|^2)!$.
\newline
\newline
Suppose that $D \cap L \cong (C_2)^4$. We have $C_4 \cong DL/L \in \Syl_2(G/L)$ and $G/L$ is solvable, so $G/L$ has $2$-length one. Hence $G/L$ is $2$-nilpotent. Write $N=O^{2}(G)$. Then the unique block $b$ of $\cO$ covered by $B$ has defect group $(C_2)^4$. Hence by~\cite{ea17} $b_N$ is Morita equivalent to a block $C$ of some finite group with $C^{(2)}=C$. Hence by Proposition \ref{morita_O_proposition} and Proposition \ref{sf_bound_with_iso} $\SOF(b_N) \leq (|D \cap N|^2)!$, and so by Theorem \ref{normal_index_p_theorem} $\SOF(B) \leq (|D \cap N|^2)! \leq (|P|^2)!$.
\newline
\newline
Suppose that $D \cap L \cong (C_2)^5$. Then $C_2 \cong DL/L \in \Syl_2(G/L)$ and $G/L$ is solvable, so $G/L$ has $2$-length one and $G/L$ is $2$-nilpotent. Without loss of generality let $D_1 \cong (C_2)^2$ and $D_ 2 \cong (C_2)^3$. Write $\overline{G}=G/Z(L)$. By Theorem \ref{component_classification} and Lemma \ref{outer} $J:=\Aut(\overline{L}_2)$ has a chief series $\overline{L}_2 \lhd J_1 \lhd J$ such that $J_1/\overline{L}_1$ is a $2$-group and $J/J_1$ has odd order, and $H:=\Aut(\overline{L}_1)$ has a chief series $\overline{L}_1 \lhd H_1 \lhd H_2 \lhd H$ such that $H_1/\overline{L}_1$, $H/H_2$ are $2$-groups and $H_2/H_1$ has odd order (these factor groups may be trivial). With appropriate identifications we may write $G/Z(L) \leq H \times J$. Now $[G:L]_2=2$, so $\overline{G}/\overline{L}$ is $2$-nilpotent. If $O^2(G)$ were a direct product of groups, then we would be done in this case as the block of $O^2(G)$ covered by $B$ would be a tensor product of blocks with defect group $(C_2)^2$ and $(C_2)^3$. The following argument allows us to move to such a situation. 
\newline
\newline
Define $G_1$ to be the preimage of $\overline{G} \cap (H_1 \times \overline{L}_2)$ in $G$. If $G_1\neq L$, then $[G_1:L]=2$ and $G/G_1$ is a $2'$-group (and so $G=G_1$ since $(G,B)$ is a reduced pair). By~\cite{ea16} $b_L$ is Morita equivalent to a principal block, and so we are done in this case by Theorem \ref{normal_index_p_theorem}. Hence $G_1=L$. Let $G_2$  be the preimage of $\overline{G} \cap (H_2 \times \overline{L}_2)$ in $G$ and $G_3$ the preimage of $\overline{G}\cap (H \times J_1)$. Since $G/G_3$ is a $2'$-group, we must have $G=G_3$. Write $B_i$ for the unique block of $G_i$ covered by $B$. We examine $B_2$. There is a finite group $E=E_1 \times E_2$ and $Z \leq Z(E)$ such that $E/Z = G_2$, $E_1$ (resp. $E_2$) is a central extension of $\overline{G} \cap H_2$ (resp. $\overline{L}_2$) and there is a block $B_{E}$ of $\cO E$ Morita equivalent to $B_2$ and with isomorphic defect groups. Now $B_{E}$ is a tensor product of blocks with elementary abelian defect groups of order $4$ or $8$, so $B_{E}$ is Morita equivalent to a principal block by~\cite{ea16}. Hence by Proposition \ref{sf_bound_with_iso} $\SOF(B_2) \leq (|D \cap G_2|^2)!$. Now $G/G_2$ is a $2$-group and so by Theorem \ref{normal_index_p_theorem} $\SOF(B) \leq (|D \cap G_2|^2)! \leq (|P|^2)!$.
\newline
\newline
(b) Suppose $O_2(G) \neq 1$. Let $\overline{B}$ be the unique block of $\cO(G/O_2(L))$ corresponding to $B$. As usual there are several cases to consider. Suppose that $D \leq L$, so that $G=L$. In this case $D/O_2(L) \cong (C_2)^4$ or $(C_2)^5$. 
\newline
\newline
Suppose that $D/O_2(G) \cong (C_2)^4$. Then by~\cite{CEKL} $\overline{B}$ is source algebra equivalent to the principal block of $\cO (A_4 \times A_4)$, $\cO(A_4 \times A_5)$ or $\cO(A_5 \times A_5)$. Using the same argument as in the proof of~\cite[Theorem 1.1]{ea16} (where further details may be found), by~\cite[Corollary 1.14]{pu01} $B$ is Morita equivalent to the principal block of a central extension of $A_4 \times A_4$, $A_4 \times A_5$ or $A_5 \times A_5$, so $\SOF(B) \leq (|P|^2)!$.
\newline
\newline
Suppose that $D/O_2(G) \cong (C_2)^5$. There is a finite group $E$ and $Z \leq Z(E)$ such that $E/Z=G$ and $E$ may be written $E=E_1 \times E_2$ where: (i) each $E_i$ is quasisimple; (ii) there is a unique block $B_E$ of $\cO E$ corresponding to $B$ with $O_{2'}(Z)$ in its kernel; (iii) the unique block $\overline{B}_E$ of $E/O_2(Z)$ corresponding to $B_E$ is Morita equivalent to $B$; (iv) $B_E$ has defect group $D_E$ with $D_E/O_2(E) \cong (C_2^5)$; (v) $|O_2(E_i)| \leq 2$. Let $B_{E_i}$ be the unique block of $E_i$ covered by $B_E$. By Lemma \ref{index2nilpotent} $B_{E_1}$ has defect group $(C_2)^2$ or $(C_2)^3$, and $B_{E_2}$ has defect group $C_4 \times C_2 \times C_2$. Then $B_{E_1}$ is Morita equivalent to a principal block by~\cite{ea16} and $B_{E_2}$ is also Morita equivalent to a principal block by Theorem \ref{component_classification} (again noting that if it is nilpotent covered, then it is inertial and so Morita equivalent to the principal block of $A_4 \times C_4$ or $A_5 \times C_4$). Hence $B_E$ is Morita equivalent to a principal block and $\SOF(B_E) \leq (128^2)!$. By Proposition \ref{central_p_subgroup} $\SOF(B)=\SOF(\overline{B}_E) \leq \SOF(B_E)$ and we are done in this case.
\newline
\newline
Finally suppose that $O_2(G) \neq 1$ and $D \cap L \neq D$. Then $(D \cap L)/O_2(L) \cong (C_2)^4$. By Lemma \ref{index2nilpotent} applied to $\overline{B}$ there is $x \in D \setminus N$ of order $4$ such that $x^2 \in O_2(G)$. Now since $[G:L]_2=2$ and $G/L$ is solvable, it follows that $G/L$ is $2$-nilpotent. Hence we may apply Corollary \ref{powers_to_centre:cor}. Again applying~\cite{CEKL}, $b_L$ is Morita equivalent to the principal block of $A_4 \times A_4 \times C_2$, $A_4 \times A_5 \times C_2$ or $A_5 \times A_5 \times C_2$, so has basic algebra of dimension at most $648=2 \cdot 18^2$ (as the principal block of $A_5$ has basic algebra of dimension $18$), so $\mf(B) \leq f(64,(32^2)!,648)$ for $f$ as in Theorem \ref{normal_p'_index_theorem}.
\end{proof}


\begin{thebibliography}{99}

\bibitem{alp} J.~L.~Alperin, \emph{Local representation theory}, Cambridge University Press (1986).
    
\bibitem{asc00} M.~Aschbacher, \emph{Finite group theory}, Cambridge University Press (2000).

\bibitem{bk07} D.~Benson and R.~Kessar, \emph{Blocks inequivalent to their Frobenius twists}, J. Algebra {\bf315} (2007), 588--599.

\bibitem{bf59} R.~Brauer and W.~Feit, \emph{On the number of irreducible characters of finite groups in a given block}, Proc. Nat. Acad. Sci. U.S.A. {\bf45} (1959), 361--365.
    

\bibitem{CEKL} D.~A.~Craven, C.~W.~Eaton, R.~Kessar and M.~Linckelmann,
{\em The structure of blocks with a Klein four defect group}, Math. Z. \textbf{268} (2011), 441--476.


\bibitem{du04} O.~D\"{u}vel, \emph{On Donovan's conjecture}, J. Algebra {\bf272} (2004), 1--26.

\bibitem{ea16} C.~W.~Eaton, \emph{Morita equivalence classes of $2$-blocks of defect three}, Proc. AMS {\bf144} (2016), 1961--1970.


\bibitem{ea17} C.~W.~Eaton, \emph{Morita equivalence classes of blocks with elementary abelian defect groups of order $16$, } preprint

\bibitem{el17} C.~W.~Eaton and M.~Livesey, \emph{Loewy lengths of blocks with abelian defect groups}, to appear, Proc. AMS. 

\bibitem{ekks14} C.~W.~Eaton, R.~Kessar, B.~K\"ulshammer and B.~Sambale, \emph{$2$-blocks with abelian defect groups}, Adv. Math. {\bf254} (2014), 706--735.

\bibitem{feit} W.~Feit, \emph{The representation theory of finite groups}, North-Holland Mathematical Library Volume 25 (1982).
    
\bibitem{GLS} D.~Gorenstein, R.~Lyons, and R.~Solomon, \emph{The classification of finite simple groups. Number 3}, Mathematical Surveys and Monographs
{\bf 40}, American Mathematical Society, Providence, RI, 1998. 

\bibitem{gr56} J.~A.~Green, \emph{On the number of automorphisms of a finite group}, Proc. Roy. Soc. London, Ser. A {\bf237} (1956), 574--581.

\bibitem{ke04} R.~Kessar, \emph{A remark on Donovan's conjecture}, Arch. Math (Basel) {\bf82} (2005), 391--394.

\bibitem{kkl12} R.~Kessar, S.~Koshitani and M.~Linckelmann, \emph{Conjectures of Alperin and Brou\'{e} for $2$-blocks with elementary abelian defect groups of order $8$},  Journal f\"{u}r die reine und angewandte Mathematik {\bf671} (2012), 85--130.

\bibitem{kl17} M.~Klupsch, \emph{Relating the Frobenius and Morita-Frobenius numbers of blocks of finite groups}, Arch. Math. {\bf108}(6) (2017), 539--543.

\bibitem{kk96} S.~Koshitani and B.~K\"ulshammer, \emph{A splitting theorem for blocks}, Osaka J. Math. \textbf{33} (1996), 343--346.

\bibitem{ku81} B.~K\"ulshammer, \emph{On $p$-blocks of $p$-solvable groups}, Comm. Alg. {\bf9} (1981), 1763--1785.

\bibitem{ku85} B.~K\"ulshammer, \emph{Crossed products and blocks with normal defect groups}, Comm. Alg. {\bf13} (1985), 147--168.

\bibitem{ku95} B.~K\"ulshammer, \emph{Donovan's conjecture, crossed products and algebraic group actions}, Israel J. Math. \textbf{92} (1995), 295--306.

\bibitem{kp90} B.~K\"ulshammer and L.~Puig, \emph{Extensions of nilpotent blocks}, Invent. Math. \textbf{102} (1990), 17--71.

\bibitem{nt88} H.~Nagao and Y.~Tsushima, \emph{Representations of finite groups}, Academic Press (1988).

\bibitem{ou95} J.~B.~Olsson and K.~Uno, \emph{Dade's conjecture for symmetric groups}, J. Algebra {\bf176} (1995), 534--560.

\bibitem{pu01} L.~Puig, \emph{Source algebras of $p$-central group extensions}, J. Algebra \textbf{235} (2001), 359--398.


\bibitem{pu11} L.~Puig, \emph{Nilpotent extensions of blocks}, Math. Z. \textbf{269} (2011), 115--136.

\bibitem{serre} J-P.~Serre, \emph{Local fields}, Berlin, New York: Springer-Verlag (1980).

\bibitem{wa94} A.~Watanabe, \emph{On nilpotent blocks of finite groups}, J. Algebra \textbf{163} (1994), 128--134.

\bibitem{zh16} Y.~Zhou, \emph{On the $p'$-extensions of inertial blocks}, Proc. AMS {\bf 144} (2016), 41--54.
\end{thebibliography}
\end{document}